\def\subsection{\@startsection{subsection}{1}%
  \z@{.5\linespacing\@plus.7\linespacing}{.3\linespacing}%
  {\normalfont\scshape}}
\newcommand{\eqnum}{\refstepcounter{equation}\textup{\tagform@{\theequation}}}
\DeclarePairedDelimiter\abs{\lvert\,}{\,\rvert}%
\DeclarePairedDelimiter\norm{\lVert\,}{\,\rVert}%
\theoremstyle{plain}
\newtheorem{theorem}{Theorem}
\newtheorem{corollary}[theorem]{Corollary}
\newtheorem{lemma}[theorem]{Lemma}
\newtheorem{proposition}[theorem]{Proposition}
\theoremstyle{definition}
\newtheorem{definition}[theorem]{Definition}
\newtheorem{remark}[theorem]{Remark}
\newcommand{\R}{\mathbb{R}}
\renewcommand{\P}{\mathbb{P}}
\renewcommand{\S}{\mathcal{S}}
\newcommand{\E}{\mathbb{E}}
\newcommand{\F}{\mathcal{F}}
\newcommand{\G}{\mathcal{G}}
\newcommand{\A}{\mathcal{A}}
\newcommand{\I}{\mathcal{I}}
\renewcommand{\H}{\mathcal{H}}
\newcommand{\C}{\mathcal{C}}
\newcommand{\B}{\mathcal{B}}
\newcommand{\X}{\mathcal{X}}
\newcommand{\Kn}{K^{(n)}}
\newcommand{\1}{\mathbbm{1}}
\DeclareMathOperator{\Var}{Var}
\DeclareMathOperator{\Osc}{Osc}
\DeclareMathOperator{\esssup}{esssup}
\DeclareMathOperator{\essinf}{essinf}
\providecommand{\norm}[1]{\left\lVert#1\right\rVert}
\providecommand{\abs}[1]{\left\lvert#1\right\rvert}
\let\oldmarginpar\marginpar
\renewcommand\marginpar[1]{\-\oldmarginpar[\raggedleft\footnotesize #1]%
{\raggedright\footnotesize #1}}
\definecolor{mypurple}{rgb}{.3,0,.5}
\definecolor{lightyellow}{RGB}{255,255,102}
\begin{document}

\title[]%
{A Central Limit Theorem for \\ Temporally Non-Homogenous Markov Chains\\ with Applications to Dynamic Programming}
\author[]
{Alessandro Arlotto and J. Michael Steele}

\thanks{A. Arlotto:
The Fuqua School of Business, Duke University, 100 Fuqua Drive, Durham, NC, 27708.
Email address: \texttt{alessandro.arlotto@duke.edu}
}

\thanks{J. M. Steele:
Department of Statistics, The Wharton School, University of Pennsylvania, 3730 Walnut Street, Philadelphia, PA, 19104.
Email address: \texttt{steele@wharton.upenn.edu}
}



\begin{abstract}
    We prove a central limit theorem for a class of additive processes that arise naturally in the theory of
    finite horizon Markov decision problems. The main theorem generalizes a classic result of \citet{Dobrushin:TPA1956}
    for temporally non-homogeneous Markov chains, and the principal innovation is that here
    the summands are permitted to depend
    on both the current state and a bounded number of future states of the chain. We show through several examples that this added
    flexibility gives one a direct path to asymptotic normality of the optimal total reward of finite horizon
    Markov decision problems. The same examples also explain why such results are not easily obtained by
    alternative Markovian techniques such as enlargement of the state space.

    \medskip
    \noindent{\sc Mathematics Subject Classification (2010)}:
    Primary: 60J05, 90C40;
    Secondary: 60C05, 60F05, 60G42, 90B05, 90C27, 90C39.

    \medskip
    \noindent{\sc Key Words:}
    non-homogeneous Markov chain, central limit theorem,
    Markov decision problem, sequential decision, dynamic inventory management, alternating subsequence.

\end{abstract}

\date{first version: May 4, 2015; this version: December 6, 2015.
}

\maketitle



\section{Stochastic Dynamic Programs and Asymptotic Distributions} \label{se:intro}

In a finite horizon stochastic dynamic program (or Markov decision problem)
with $n$ periods, it is typical that the decision policy $\pi^*_n$ that maximizes total expected reward
will take actions that depend on both the current state of the system and on the number of periods that remain within the horizon.
The total reward $R_n(\pi^*_n)$ that is obtained when one follows the mean-optimal
policy $\pi^*_n$ will have the expected value that optimality requires, but the actual reward $R_n(\pi^*_n)$ that is realized
may --- or may not --- behave in a way that is well summarized by its expected value alone.

As a consequence, a well-founded judgement about the economic value of the policy $\pi^*_n$ will typically require
a deeper understanding of the random variable $R_n(\pi^*_n)$. One gets meaningful benefit from the knowledge of
the variance of $R_n(\pi^*_n)$ or its higher moments \citep{ArlottoGansSteele:OR2014},
but, in the most favorable instance, one would hope to know the
distribution of $R_n(\pi^*_n)$, or at least an asymptotic approximation to that distribution.

Limit theorems for the total reward (or the total cost)
of a Markov decision problem (or MDP) have been studied extensively, but
earlier work has focused almost exclusively on those problems where the optimal
decision policy is stationary. The first steps were taken
by \citet{Mandl:KYBER1973,Mandl:AAP1974,Mandl:PROCEEDING1974}
in the context of finite state space MDPs. This work was subsequently
refined and extended to more general MDPs by \citet{Mandl:SD1985}, \citet{MandlLausmanova:AOR1991},
\citet{Mendoza:MO2008}, and \citet{MendozaHernandez:JAP2010}.
Through these investigations one now has a substantial limit theory for a rich class of MDPs that
includes infinite-horizon MDPs with discounting and infinite horizon MDPs where one seeks to
maximize the long-run average reward.

Distributional properties of MDPs have also been considered in the
design of pathwise asymptotic optimal controls.
For instance, \citet{Leizarowitz:STO1987,Leizarowitz:STO1988}, \citet{Rotar:1985,Rotar:1986},
\citet{AsrievRotar:SSR1990}, \citet{Rotar:1991}, and \citet{BelkinaRotar:TVP2005}
studied controls that produce a long-run average reward that
is asymptotically optimal almost surely.
Also, \citet{Leizarowitz:MOR1996} investigates pathwise optimality in infinite horizon problems.
\citet{Rotar:DCD2012} provides a sustained review of this literature including a more comprehensive list of references.

Here the focus is on finite horizon MDPs and, to deal with such problems, one needs to break
from the framework of stationary decision policies. Moreover, for the purpose of the intended applications, it is
useful to consider additive functionals that are more complex than those that have been considered earlier in the theory
of temporally non-homogeneous Markov chains. These functionals are defined in the next subsection where we also give the statement of
our main theorem.

\subsection*{A Class of MDP Linked Processes}

In the theory of discrete-time finite horizon MDPs, one commonly studies a sequence of problems with increasing sizes.
Here, it will be convenient to consider two parameters, $m$ and $n$.  The parameter $m$ is fixed, and it will be determined by the nature of
the actions and rewards of the MDP. The parameter $n$ measures the size of the MDP; it is essentially the traditional horizon
size, but it comes with a small twist.

Now, for a given $m$ and $n$, we consider an arbitrary sequence of random variables
$\{ X_{n,i}: 1 \leq i \leq n+m\}$ with values in a Borel space $\X$, and we also consider an array of $n$ real valued functions
of $1+m$ variables,
$$
f_{n,i}: \X^{1+m} \rightarrow \R, \quad 1 \leq i \leq n.
$$
Further properties will soon be required for both the random variables
and the array of functions, but, for the moment, we only note that
the random variable of most importance to us here is the sum
\begin{equation}\label{eq:Sn}
S_n = \sum_{i=1}^n Z_{n,i}  \quad \text{where} \quad Z_{n,i}= f_{n,i}(X_{n,i}, \ldots, X_{n,i+m}).
\end{equation}

In a typical MDP application, the random variable $Z_{n,i}$ has an interpretation
as a reward for an action taken in period $i \in \{1, 2, \ldots,n\}$.  The size parameter $n$ is then
the number of periods in which decisions are made, and $S_n$ is the total reward received over all periods
$i \in \{1, 2, \ldots,n\}$ when one follows the policy $\pi_n$. Here, of course, the actions chosen by $\pi_n$
are allowed to depend on both the current \emph{time} and the current \emph{state}.

The parameter $m$ is new to this formulation, and, as we will shortly explain,  the flexibility provided by $m$ is precisely what
makes sums of the random variables $Z_{n,i}= f_{n,i}(X_{n,i}, \ldots, X_{n,i+m})$ useful in the theory of MDPs.
In the typical finite horizon setting, the index
$i$ corresponds to the decision period,
and the realized reward that is associated with period $i$ may depend on many things.
In particular, it commonly depends on $n$, $i$, the decision period state $X_{n,i}$,
and \emph{one or more} values of the post-decision period realizations of
the driving sequence $\{X_{n,i}: 1 \leq i \leq n+m\}$.

\subsection*{Requirements on the Driving Sequence}

We always require the driving
sequence $\{X_{n,i}: 1 \leq i \leq n+m\}$ to be a Markov process,
but here the Markov kernel for the transition between time $i$ and $i+1$  is allowed to change as $i$ changes.
More precisely, we take $\B(\X)$ to be the set of Borel subsets of the Borel space $\X$, and we define $\{X_{n,i}: 1 \leq i \leq n+m\}$
to be the \emph{temporally non-homogeneous Markov chain} that is determined by specifying a distribution for the initial value
$X_{n,1}$ and by making the transition from time $i$ to time $i+1$ in accordance with the Markov transition kernel
\begin{equation*}
\Kn_{i,i+1}(x,B) = \P(X_{n,i+1} \in B \,|\, X_{n,i} = x), \quad \text{where }  x \in \X \,\, \text{and }B \in \B(\X).
\end{equation*}

The transition kernels can be quite general, but we do require a condition on their minimal ergodic
coefficient. Here we first recall that for any
Markov transition kernel $K=K(x, dy)$ on $\X$, the \emph{\citeauthor{Dobrushin:TPA1956} contraction coefficient} is defined by
\begin{equation}\label{eq:contraction-coefficient-def}
    \delta(K) = \sup_{\substack{x_1, x_2 \in \X \\ B \in \B(\X)}} \abs{ K(x_1, B) - K(x_2, B) },
\end{equation}
and the corresponding \emph{ergodic coefficient} is given by
$$
\alpha(K) = 1 - \delta(K).
$$
Further, for an array $\{\Kn_{i,i+1} : 1 \leq i < n\}$ of Markov transition kernels on $\X$, the
\emph{minimal ergodic coefficient} of the $n$'th row is defined by setting
\begin{equation}\label{eq:minimal-ergodic-coeff}
\alpha_n = \min_{1 \leq i < n} \alpha(\Kn_{i,i+1}).
\end{equation}

There is also a minor technical point worth noting here. Although we study additive functionals
that can depend on the full row $\{X_{n,i}: 1 \leq i \leq n+m\}$ with $n+m$ elements,
the last $1+m$ elements of the row are used in a way that does not require any constraint on the associated
ergodic coefficients.
Specifically, the last $1+m$ elements of the row are used only to determine value of the time $n$
reward that one receives as a consequence of the last decision. It is for this reason that in expressions like
\eqref{eq:minimal-ergodic-coeff} we need only to consider $i$
in the range from $1$ to $n-1$.

\subsection*{Main Result: A CLT for Temporally Non-Homogeneous Markov Chains}

When the sums $\{S_n: n \geq 1\}$ defined by \eqref{eq:Sn} are centered and scaled, it is natural to expect that,
in favorable circumstances, they will converge in distribution to the standard Gaussian.
The next theorem confirms that this is the case provided that one has some modest
compatibility between the  size of
the minimal ergodic coefficient $\alpha_n$, the size of the functions $f_{n,i}$, $1 \leq i \leq n$, and the variance of
$S_n$.

\begin{theorem}[CLT for Temporally Non-Homogeneous Markov Chains]\label{th:CLT-nonhomogeneous-chain}
If there are constants $C_1, C_2,  \ldots$ such that
\begin{equation}\label{eq:asymptotic-condition-for-clt-1}
\max_{1 \leq i \leq n} \norm{f_{n,i}}_\infty  \leq C_n
\quad
\text{and}
\quad
C_n^2 \alpha_n^{-2}  = o(\Var[S_n]),
\end{equation}
then one has the convergence in distribution
\begin{equation}\label{eq:CLTLimit}
\frac{S_n - \E[S_n]}{\sqrt{\Var[S_n]}} \Longrightarrow N(0,1), \quad \quad \text{ as } n \rightarrow \infty.
\end{equation}
\end{theorem}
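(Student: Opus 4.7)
My plan is to apply a martingale central limit theorem to the Doob decomposition of $S_n - \E[S_n]$ along the natural filtration $\F_{n,k} = \sigma(X_{n,1}, \ldots, X_{n,k})$, writing
\[
S_n - \E[S_n] = \sum_{k=1}^{n+m} D_{n,k}, \qquad D_{n,k} = \E[S_n \mid \F_{n,k}] - \E[S_n \mid \F_{n,k-1}].
\]
The first step is a uniform bound of the form $|D_{n,k}| \le 2(m+1)\, C_n$. Revealing $X_{n,k}$ alters the conditional expectation of only those $Z_{n,i}$ whose window of length $m+1$ contains index $k$ (the ``completed'' contribution) together with the conditional means of future summands via the Markov kernel (the ``predictive'' contribution); both pieces are controlled by $C_n$ through the triangle inequality, with a multiplicative constant depending only on the fixed parameter $m$.

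Because $\alpha_n \le 1$, the hypothesis $C_n^2\alpha_n^{-2} = o(\Var[S_n])$ already yields $C_n = o(\sqrt{\Var[S_n]})$, so the uniform bound on the $D_{n,k}$'s delivers the conditional Lindeberg condition at once. The substantive work is then to establish the conditional quadratic-variation limit
\[
\frac{1}{\Var[S_n]}\sum_{k=1}^{n+m} \E\!\left[D_{n,k}^2 \mid \F_{n,k-1}\right] \;\xrightarrow{\P}\; 1.
\]
To this end I would write each $D_{n,k}$ explicitly as a sum of completed and predictive pieces, and then invoke the Dobrushin contraction
\[
\sup_{x,x'}\,\| \Kn_{k,j}(x,\cdot) - \Kn_{k,j}(x',\cdot) \|_{\text{TV}} \;\le\; (1-\alpha_n)^{j-k}
\]
to show that the covariance between $\E[D_{n,k}^2 \mid \F_{n,k-1}]$ and $\E[D_{n,k'}^2 \mid \F_{n,k'-1}]$ decays geometrically in $|k-k'|$. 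Summing these covariances is intended to yield an $O(C_n^4 \alpha_n^{-3} n)$ bound on the variance of the quadratic variation, which is $o(\Var[S_n]^2)$ under the hypothesis by Chebyshev; the exponent $\alpha_n^{-2}$ in \eqref{eq:asymptotic-condition-for-clt-1} is precisely the book-keeping cost of squaring a geometric sum in the mixing decay. With Lindeberg and conditional variance in hand, a standard martingale CLT (for example, Hall and Heyde, Theorem 3.2) closes the argument.

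The main obstacle I anticipate is the covariance estimate just described: one must carefully track how the completed and predictive parts of $D_{n,k}$ and $D_{n,k'}$ interact, and guard against squaring introducing an additional factor of $\alpha_n^{-1}$ that the hypothesis does not supply. If the direct bookkeeping proves unwieldy, the fallback is Dobrushin's own block-gap method: partition $\{1,\ldots,n\}$ into alternating large blocks of size $L_n$ and small gaps of size $\ell_n \gg \alpha_n^{-1}$, couple the block sums to independent copies using the ergodic coefficient, show that the gap contributions are $o(\sqrt{\Var[S_n]})$ by the boundedness hypothesis, and conclude with the classical Lindeberg CLT for independent summands. The parameter $m$ enters only through the uniform bound on $|D_{n,k}|$ (or, in the fallback, through an inflation of the gap size by $m$), so the extra flexibility introduced by $m \ge 1$ does not alter the overall scheme.
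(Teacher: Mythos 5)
Your overall scheme is the paper's: the Doob differences $D_{n,k}=\E[S_n\mid\F_{n,k}]-\E[S_n\mid\F_{n,k-1}]$ coincide (for $k>m$) with the paper's value-to-go martingale increments $d_{n,k}=V_{n,k}-V_{n,k-1}+Z_{n,k-m}$, and the plan of verifying a negligibility condition plus a weak law for the conditional variances and then invoking a martingale CLT is exactly what the paper does. However, two of your quantitative claims are wrong, and the second one is a genuine gap. First, the bound $|D_{n,k}|\le 2(m+1)C_n$ is false: the ``predictive'' part of $D_{n,k}$ is a sum of up to $n-k$ terms $\E[Z_{n,j}\mid\F_{n,k}]-\E[Z_{n,j}\mid\F_{n,k-1}]$, each of size $O(C_n(1-\alpha_n)^{j-k})$ by the contraction estimate; summing the geometric series gives $\norm{D_{n,k}}_\infty = O(C_n\alpha_n^{-1})$, and no bound with a constant depending only on $m$ is available (take a nearly deterministic chain). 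This does not sink the negligibility step --- the hypothesis $C_n^2\alpha_n^{-2}=o(\Var[S_n])$ is exactly what makes $C_n\alpha_n^{-1}=o(\sqrt{\Var[S_n]})$ --- but your stated justification (``the hypothesis already yields $C_n=o(\sqrt{\Var[S_n]})$'') proves less than what is needed.

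The serious problem is the quadratic-variation step. A covariance bound that sums to $O(C_n^4\alpha_n^{-3}n)$ cannot be converted into $o(\Var[S_n]^2)$ from the hypothesis alone, because for $m\ge 1$ there is \emph{no} lower bound on $\Var[S_n]$ of order $n\alpha_n$ --- this is precisely the failure of \eqref{eq:variance-lower-bound} illustrated by the cancellation example in Section \ref{se:Dobrushin}. One can have $C_n=\alpha_n=1$, $\Var[S_n]\to\infty$, yet $\Var[S_n]=o(\sqrt n)$, in which case $o(\Var[S_n]^2)=o(n)$ and your $O(n)$ bound is useless. The paper avoids the factor of $n$ entirely: it first proves the \emph{uniform} $L^\infty$ oscillation bound $\Osc\bigl(\sum_{j>i}\E[d_{n,j}^2\mid\F_{n,i}]\bigr)\le MC_n^2\alpha_n^{-2}$ (Lemma \ref{lm:oscillation-mds}, resting on the cross-moment estimates of Lemma \ref{lm:oscillation-bounds}), then bounds each cross term in $\E[\Delta_n^2]$ by $MC_n^2\alpha_n^{-2}\,\E[\eta_i]$ and uses $\sum_i\E[\eta_i]\le\Var[S_n]$, arriving at $\E[\Delta_n^2]\le MC_n^2\alpha_n^{-2}\Var[S_n]=o(\Var[S_n]^2)$. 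Replacing your lag-wise covariance summation by this ``uniform tail bound times $\E[\eta_i]$'' device is the missing idea; without it (and the same objection applies to the blocking fallback, which also implicitly needs the variance to grow like the number of blocks), the argument does not close.
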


\begin{corollary}\label{cor:CLT}
If there are constants $c > 0$ and $C<\infty$ such that
$$\alpha_n \geq c \quad \text{and} \quad  C_n \leq C \, \text{ for all } n \geq 1,$$
then one has the asymptotic normality \eqref{eq:CLTLimit}
whenever $\Var[S_n] \rightarrow \infty$ as $n\rightarrow \infty$.
\end{corollary}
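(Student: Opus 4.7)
The corollary is a straightforward specialization of Theorem \ref{th:CLT-nonhomogeneous-chain}, so the plan is simply to verify that the two hypotheses \eqref{eq:asymptotic-condition-for-clt-1} hold under the stronger uniform assumptions $\alpha_n \geq c > 0$ and $C_n \leq C < \infty$.

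First, I would take $\widetilde C_n \equiv C$ as the sequence of bounds on $\max_{1 \leq i \leq n}\|f_{n,i}\|_\infty$; this is legitimate because by hypothesis $\|f_{n,i}\|_\infty \leq C_n \leq C$ for every $n \geq 1$ and every $1 \leq i \leq n$. Next, I would bound the key ratio: under the uniform ergodicity assumption $\alpha_n \geq c$, one has
\begin{equation*}
\widetilde C_n^2 \, \alpha_n^{-2} \;\leq\; C^2/c^2,
\end{equation*}
so this quantity is bounded in $n$ by a constant independent of $n$.

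The final step is to combine this uniform bound with the assumption $\Var[S_n] \to \infty$. Since a bounded sequence divided by a sequence tending to infinity tends to $0$, we obtain $\widetilde C_n^2 \, \alpha_n^{-2} = o(\Var[S_n])$, which is exactly the second condition in \eqref{eq:asymptotic-condition-for-clt-1}. Theorem \ref{th:CLT-nonhomogeneous-chain} then delivers the convergence \eqref{eq:CLTLimit}.

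I do not anticipate any real obstacle: there is no extra work beyond the two inequalities above, and no new probabilistic content is introduced by the corollary — it merely packages the most commonly encountered sufficient conditions (uniform bounds on the summands and uniform Doeblin-type minorization of the transition kernels) in a form that is easy to check in applications such as the dynamic programming problems motivating this paper.
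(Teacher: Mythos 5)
Your argument is correct and is exactly the intended one: the corollary follows from Theorem \ref{th:CLT-nonhomogeneous-chain} because $C_n^2\alpha_n^{-2} \leq C^2/c^2$ is bounded, hence $o(\Var[S_n])$ once $\Var[S_n]\to\infty$. The paper treats the corollary as immediate in precisely this way, so there is nothing to add.
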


\begin{remark}[Boundedness Assumption]
One might hope to relax the condition in Theorem \ref{th:CLT-nonhomogeneous-chain}
that for each fixed $n \geq 1$ the functions $\{f_{n,i}: 1\leq i \leq n\}$ are uniformly bounded. Even though the oscillation bounds
in Section \ref{se:proof1-oscillation} make heavy use of the supremum norm, one could conceivably use truncation arguments
that still give access to effective oscillation bounds. Unfortunately, truncations would substantially complicate an argument that
is already long, so we have stayed with uniform boundedness.  In some simpler contexts, it is known that
the uniform boundedness condition
can be releaxed; specifically, there are such relaxations in the Markov additive CLTs  of
\citet{Nagaev:TVP1957,Nagaev:TVP1961}, \citet{Jon:PS2004}, and \citet{Statuljavicus:LMS1969}.
\end{remark}

\subsection*{Organization of the Analysis}

Before proving this theorem, it is useful to note
how it compares with the classic CLT of \citet{Dobrushin:TPA1956} for non-homogeneous Markov chains.
If we set $m=0$ in Theorem \ref{th:CLT-nonhomogeneous-chain} then we recover the
\citeauthor{Dobrushin:TPA1956} theorem, so the main issue is to understand how one benefits from
the possibility of taking $m\geq 1$. This is addressed in detail in Section \ref{se:Dobrushin} and in
the examples of Sections \ref{se:inventory} and \ref{se:alternating}.

After recalling some basic facts about the minimal ergodic coefficient in Section \ref{se:contractionCoefficient},
the proof begins in earnest in Section \ref{se:martingale} where
we note that there is a martingale that one can expect to be a good approximation for $S_n$.
The confirmation of the approximation is carried out in
Sections \ref{se:proof1-oscillation} and \ref{se:proof2-valueToGo}.
In Section \ref{se:proof3-end} we complete the proof by showing that the
assumptions of our theorem also imply that
the approximating martingale satisfies the conditions of a basic martingale central limit theorem.

We then take up applications and examples. In particular, we show in
Section \ref{se:inventory} that Theorem \ref{th:CLT-nonhomogeneous-chain}
leads to an asymptotic normal law for the optimal total cost
of a classic dynamic inventory management problem, and in Section \ref{se:alternating} we see how the theorem
can be applied to a well-studied problem in combinatorial optimization.

\section{On $m=0$ vs $m>0$ and \citeauthor{Dobrushin:TPA1956}'s CLT}\label{se:Dobrushin}

\citet{Dobrushin:TPA1956} introduced many of the concepts that are
central to the theory of additive functionals of a non-homogenous Markov chain.
In addition to introducing the contraction coefficient \eqref{eq:contraction-coefficient-def},
\citeauthor{Dobrushin:TPA1956} also provided one of the earliest --- yet most refined --- of the CLTs for
non-homogenous chains.

\begin{theorem}[\citealp{Dobrushin:TPA1956}]\label{th:CLT-Dobrushin}
If there are constants $C_1, C_2,  \ldots$ such that
\begin{equation}\label{eq:asymptotic-condition-for-clt-2}
\max_{1 \leq i \leq n} \norm{f_{n,i}}_\infty  \leq C_n
\quad
\text{and}
\quad
C_n^2 \alpha_n^{-3}  = o\bigg(\sum_{i=1}^n \Var[f_{n,i}(X_{n,i})] \bigg),
\end{equation}
then for $S_n = \sum_{i=1}^n f_{n,i}(X_{n,i})$
one has the asymptotic Gaussian law
$$
\frac{S_n - \E[S_n]}{\sqrt{\Var[S_n]}} \Longrightarrow N(0,1), \quad \quad \text{ as } n \rightarrow \infty.
$$
\end{theorem}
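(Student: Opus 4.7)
My plan is to prove Theorem \ref{th:CLT-Dobrushin} by the Gordin-style martingale approximation that also furnishes the $m=0$ specialization of Theorem \ref{th:CLT-nonhomogeneous-chain}. Write $\bar f_{n,i} = f_{n,i} - \E[f_{n,i}(X_{n,i})]$ for the centered summand and $\F_{n,i} = \sigma(X_{n,1},\dots,X_{n,i})$. I would introduce the value-to-go function
$$
h_i(x) = \E\!\left[\,\sum_{j=i}^n \bar f_{n,j}(X_{n,j}) \ \bigg|\ X_{n,i}=x\right], \qquad 1 \leq i \leq n,
$$
with the convention $h_{n+1} \equiv 0$. The Markov property yields the Poisson-type identity $\bar f_{n,i}(X_{n,i}) = h_i(X_{n,i}) - \E[h_{i+1}(X_{n,i+1}) \mid \F_{n,i}]$, and summing across $i$ produces the \emph{exact} decomposition
$$
S_n - \E[S_n] = \sum_{i=1}^n D_i, \qquad D_i = h_i(X_{n,i}) - \E[h_i(X_{n,i}) \mid \F_{n,i-1}],
$$
in which $\{D_i\}_{i=1}^n$ is a martingale difference sequence with respect to $\{\F_{n,i}\}$.

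The next step is to control the $D_i$'s via the Dobrushin contraction coefficient. Writing $K^{(n)}_{i,j} = K^{(n)}_{i,i+1} \cdots K^{(n)}_{j-1,j}$ for the composite transition kernel, the submultiplicativity $\delta(K^{(n)}_{i,j}) \leq \prod_{k=i}^{j-1}\delta(K^{(n)}_{k,k+1}) \leq (1-\alpha_n)^{j-i}$, together with $\Osc(\bar f_{n,j}) \leq 2 C_n$, gives
$$
\Osc(h_i) \leq 2 C_n \sum_{j=i}^n (1-\alpha_n)^{j-i} \leq \frac{2 C_n}{\alpha_n},
$$
and hence the uniform almost-sure bound $|D_i| \leq 2 C_n / \alpha_n$.

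With the exact martingale representation and this uniform bound in hand, I would invoke a standard martingale CLT (of Brown, or Hall and Heyde). Writing $s_n^2 = \Var[S_n]$, the uniform bound immediately reduces the Lindeberg condition to the numerical requirement $C_n^2 \alpha_n^{-2} = o(s_n^2)$. The bridge to Dobrushin's hypothesis comes from the Dobrushin covariance estimate $|\Cov(\bar f_{n,i}(X_{n,i}), \bar f_{n,j}(X_{n,j}))| \leq 2 C_n^2 (1-\alpha_n)^{|j-i|}$, which upon summing along diagonals yields $s_n^2 \leq c\, \alpha_n^{-1} \sum_{i=1}^n \Var[f_{n,i}(X_{n,i})]$ for an absolute constant $c$. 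Combined with the assumption $C_n^2 \alpha_n^{-3} = o\bigl(\sum_i \Var[f_{n,i}(X_{n,i})]\bigr)$, this gives $C_n^2 \alpha_n^{-2} = o(s_n^2)$ and secures the Lindeberg side.

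The main obstacle, as is typical for this kind of result, is then the convergence of the conditional quadratic variation $s_n^{-2} \sum_i \E[D_i^2 \mid \F_{n,i-1}] \to 1$ in probability. I would first bound $\Osc\!\bigl(x \mapsto \E[D_i^2 \mid X_{n,i-1}=x]\bigr)$ by another application of the contraction argument, now to the kernel associated with $h_i^2$, yielding an oscillation of order $C_n^2 \alpha_n^{-2}$; and I would then show that the fluctuation of $\sum_i \E[D_i^2 \mid \F_{n,i-1}]$ around its mean $s_n^2$ is $o(s_n^2)$ via a second round of contraction-based decorrelation estimates applied to this sequence. This is the point where essentially all of the technical weight sits, and where the extra factor of $\alpha_n^{-1}$ in the hypothesis \eqref{eq:asymptotic-condition-for-clt-2} is actually consumed.
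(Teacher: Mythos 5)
Your overall route---the exact martingale decomposition through the value-to-go functions $h_i$, the oscillation bound $\Osc(h_i)\le 2C_n\alpha_n^{-1}$ obtained from the submultiplicativity of the contraction coefficient, and the reduction of a martingale CLT to the negligibility condition $C_n^2\alpha_n^{-2}=o(s_n^2)$ plus a weak law for the conditional variances---is exactly the route the paper takes for Theorem \ref{th:CLT-nonhomogeneous-chain}, and it is also the Sethuraman--Varadhan proof of Dobrushin's theorem. However, there is a genuine gap at the one step that is specific to Theorem \ref{th:CLT-Dobrushin}: the passage from the hypothesis $C_n^2\alpha_n^{-3}=o\bigl(\sum_i\Var[f_{n,i}(X_{n,i})]\bigr)$ to the condition $C_n^2\alpha_n^{-2}=o(s_n^2)$. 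That implication requires a \emph{lower} bound on $s_n^2=\Var[S_n]$ of the form $s_n^2\ge c\,\alpha_n\sum_i\Var[f_{n,i}(X_{n,i})]$, which is precisely the Sethuraman--Varadhan inequality \eqref{eq:variance-lower-bound}. What you actually derive from the covariance decay estimate is the \emph{upper} bound $s_n^2\le c\,\alpha_n^{-1}\sum_i\Var[f_{n,i}(X_{n,i})]$, and this points in the wrong direction: an upper bound on $s_n^2$ can only make the ratio $C_n^2\alpha_n^{-2}/s_n^2$ larger, so it cannot show that the ratio tends to zero. The inequality you need is a substantive result (Sethuraman and Varadhan prove it by spectral methods, sharpening Iosifescu--Theodorescu), and it does not follow from the exponential covariance decay you invoke; Section \ref{se:Dobrushin} of the paper emphasizes that the availability of this lower bound is exactly what separates the $m=0$ case from $m\ge 1$, where the analogous bound fails outright.

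Apart from that step, the skeleton is sound: the Poisson-type identity and the exactness of the martingale decomposition check out, the uniform bound $|D_i|\le 2C_n\alpha_n^{-1}$ does reduce Lindeberg to $C_n\alpha_n^{-1}=o(s_n)$, and your plan for the conditional quadratic variation matches the paper's oscillation-based treatment (its Lemmas on conditional moments, cross moments, and the $L^2$-bound for $\Delta_n$), though you leave all of that as a sketch. So the fix is simple to state but not to execute: either prove the variance lower bound \eqref{eq:variance-lower-bound}, or cite it explicitly as the external ingredient that converts Dobrushin's hypothesis \eqref{eq:asymptotic-condition-for-clt-2} into the hypothesis \eqref{eq:asymptotic-condition-for-clt-1} of Theorem \ref{th:CLT-nonhomogeneous-chain} with $m=0$---which is in fact how the paper positions the relationship between the two theorems.
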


After \citeauthor{Dobrushin:TPA1956}'s work there were refinements and extensions by \citet{Sarymsakov:TPA1961},
\citet{Hanen:AIHP1963}, and \citet{Statuljavicus:LMS1969}, but the work that is closest to the approach taken here
is that of \citet{SetVar:EJP2005}. They used a martingale approximation
to give a streamlined proof of \citeauthor{Dobrushin:TPA1956}'s theorem, and they also used spectral theory to prove the
variance lower bound
\begin{equation}\label{eq:variance-lower-bound}
\frac{1}{4} \alpha_n\bigg( \sum_{i=1}^n \Var[f_{n,i}(X_{n,i})] \bigg) \leq \Var[S_n].
\end{equation}
This improves a lower bound of \citet[Theorem 1.2.7]{IosifescuTheodorescu:SPRI1969} by a factor of two, and
\citet[Corollary 15]{Pel:PTRF2012} gives some further refinements.

There are also upper bounds for the variance of $S_n$
in terms of the sum of the individual variances and
the reciprocal $\alpha_n^{-1}$ of the minimal ergodic coefficient. The most recent of these
are given by \citet{Sze:SPL2012}
where they are used in the analysis of continued fraction expansions among other things.

\subsection*{Comparison of Conditions}

Theorem \ref{th:CLT-nonhomogeneous-chain} requires that $C_n^2 \alpha_n^{-2}  = o(\Var[S_n])$ as $n \rightarrow \infty$
--- a condition that is directly imposed on the variance of the total sum $S_n$. On the other hand,
\citeauthor{Dobrushin:TPA1956}'s theorem imposes the condition \eqref{eq:asymptotic-condition-for-clt-2}
on the sum of the variances of the \emph{individual} summands.
This difference is not
accidental; it actually underscores a notable distinction between the traditional setting where $m=0$ and
the present situation where $m\geq 1$.

When one has $m=0$, the variance lower bound  \eqref{eq:variance-lower-bound} tells us that
condition \eqref{eq:asymptotic-condition-for-clt-2} of Theorem \ref{th:CLT-Dobrushin}
implies condition \eqref{eq:asymptotic-condition-for-clt-1} of Theorem \ref{th:CLT-nonhomogeneous-chain}, but,
when $m \geq 1$, there is not any analog to the lower bound \eqref{eq:variance-lower-bound}. This is
the nuance that forces us
to impose an explicit condition on the variance of the sum $S_n$ in  Theorem \ref{th:CLT-nonhomogeneous-chain}.

A simple example can be used to illustrate the point.
We take $m = 1$ and for each $n \geq 1$ we consider a sequence $X_{n,1},X_{n,2},\ldots, X_{n,n+1}$ of independent identically distributed
random variables with $0<\Var[X_{n,1}]<\infty$.
The minimal ergodic coefficient in this case is just $\alpha_n = 1$.
Next, for $1 \leq i \leq n$ we consider the function
$$
f_{n,i}(x,y) = \begin{cases}
\phantom{-}x & \text{if $i$ is even} \\
-y & \text{if $i$ is odd;}
\end{cases}
$$
we then set $S_0=0$, and, more generally, we let
\begin{equation*}
S_n = \sum_{i=1}^n f_{n,i}(X_{n,i}, X_{n,i+1}).
\end{equation*}
Now, for each $n \geq 0$ we see that cancellations in the sum give us
$S_{2n} = 0$ and $S_{2n+1} = - X_{2n+1,2(n+1)}$,
so, according to parity we find
$$
\Var[S_{2n}] = 0 \quad \quad \text{and} \quad \quad \Var[S_{2n+1}] = \Var[X_{n,1}].
$$
In particular, we have $\Var[S_{n}]=O(1)$ for all $n \geq 1$, while, on the other hand, for the
sum of the individual variances we have that
$$
\sum_{i=1}^n \Var[f_{n,i}(X_{n,i}, X_{n,i+1})] = n \Var[X_{n,1}] =\Omega(n).
$$
The bottom line is that when $m\geq 1$, there is no analog of the lower bound \eqref{eq:variance-lower-bound},
and, as a consequence, a result like Theorem \ref{th:CLT-nonhomogeneous-chain} needs to impose an explicit condition on
$\Var[S_{n}]$ rather than a condition on the sum of the variances of the individual summands.

\subsection*{Two Related Alternatives}

One might hope to prove Theorem \ref{th:CLT-nonhomogeneous-chain} by considering an enlarged state space
where one could first apply \citeauthor{Dobrushin:TPA1956}'s CLT (Theorem \ref{th:CLT-Dobrushin}) and then extract
Theorem \ref{th:CLT-nonhomogeneous-chain} as a consequence. For example, given the
conditions of Theorem \ref{th:CLT-nonhomogeneous-chain} with $m=1$, one might introduce the
bivariate chain $\{\widehat{X}_{n,i} = (X_{n,i},X_{n,i+1}): 1 \leq i \leq n\}$
with the hope of extracting the conclusion of
Theorem \ref{th:CLT-nonhomogeneous-chain} by applying \citeauthor{Dobrushin:TPA1956}'s theorem to $\{\widehat{X}_{n,i}: 1 \leq i \leq n\}$.

The fly in the ointment is that the resulting bivariate chain can be degenerate in the sense that the minimal ergodic coefficient
of the chain $\{\widehat{X}_{n,i}: 1 \leq i \leq n\}$ can equal zero. In such a situation,
\citeauthor{Dobrushin:TPA1956}'s theorem does not apply to the process $\{\widehat{X}_{n,i}: 1 \leq i \leq n\}$, even though Theorem \ref{th:CLT-nonhomogeneous-chain} may still provide a useful central limit theorem.
We give two concrete examples of this phenomenon in Sections \ref{se:inventory} and \ref{se:alternating}.

A further way to try to rehabilitate the possibility of using the bivariate
chain $\{\widehat{X}_{n,i}: 1 \leq i \leq n\}$ is to appeal
to theorems where the minimal ergodic coefficient $\alpha_n$ is replaced with some less fragile quantity. For example,
\citet{Pel:PTRF2012} has proved that one can replace $\alpha_n$ in \citeauthor{Dobrushin:TPA1956}'s theorem
with the maximal coefficient of correlation $\rho_n$.
Since one always has $\rho_n \leq \sqrt{1-\alpha_n}$, \citeauthor{Pel:PTRF2012}'s CLT is guaranteed to apply at least as widely
as \citeauthor{Dobrushin:TPA1956}'s CLT. Nevertheless, the examples of Sections \ref{se:inventory} and \ref{se:alternating} both show that
this refinement still does not help.

\section{On Contractions and Oscillations}\label{se:contractionCoefficient}

To prove Theorem \ref{th:CLT-nonhomogeneous-chain},
we need to assemble a few properties of the \citeauthor{Dobrushin:TPA1956} contraction coefficient.
Much more can be found in
\citet[Section 4.3]{Seneta:SPRI2006}, \citet[Section 4.2]{Winkler:SPRI2003}, or \citet[Chapter 4]{DelMoral:SPRI2004}.

If $\mu$ and $\nu$  are two probability measures, we write $\norm{ \mu - \nu}_{\rm TV}$ for the
total variation distance between $\mu$ and $\nu$. \citeauthor{Dobrushin:TPA1956}'s coefficient \eqref{eq:contraction-coefficient-def}
can then be written as
\begin{equation*}
\delta(K)
= \sup_{x_1,x_2 \in \X} \norm{ K(x_1, \cdot) - K(x_2, \cdot) }_{\rm TV},
\end{equation*}
and one always has $0 \leq \delta(K)\leq 1$. For any two Markov kernels $K_1$ and  $K_2$ on $\X$,
we also set
$$
(K_1K_2)(x,B) = \int K_1(x,dz) K_2(z,B),
$$
so $(K_1K_2)(x,B)$ represents the probability that one ends up in $B$ given that one starts at $x$ and takes two steps:
the first governed by the transition kernel $K_1$ and the second governed by the kernel $K_2$.
A crucial property of the \citeauthor{Dobrushin:TPA1956} coefficient $\delta$ is that one has the product inequality
\begin{equation}\label{eq:contraction-coeff-product-bound}
\delta(K_1K_2) \leq \delta(K_1) \delta(K_2).
\end{equation}

Now, given any array $\{\Kn_{i,i+1} :  1 \leq i < n\}$  of Markov kernels and any pair of times
$1 \leq i < j \leq n $, one can form the multi-step transition kernel
$$
\Kn_{i,j}(x,B) = (\Kn_{i,i+1} \Kn_{i+1,i+2} \cdots \Kn_{j-1,j})(x,B),
$$
and, as the notation suggests,
the kernel $\Kn_{i,i+1}$ can change as $i$ changes.
The product inequality \eqref{eq:contraction-coeff-product-bound}
and the definition of the minimal ergodic coefficient \eqref{eq:minimal-ergodic-coeff} then tell us
\begin{equation}\label{eq:contraction-coeff-bound-1}
\delta(\Kn_{i,j}) \leq  ( 1 - \alpha_n)^{j-i} \quad \quad \text{for all } 1 \leq i < j \leq n.
\end{equation}

\citeauthor{Dobrushin:TPA1956}'s coefficient can also be characterized by the action of the Markov kernel
on a natural function class.
First, for any bounded measurable function $h: \X \rightarrow \R$ we note that the operator
$$
(K h)(x) = \int K(x, dz)h(z),
$$
is well defined, and one also has that the oscillation of $h$
$$
\Osc(h) = \sup_{z_1,z_2\in \X} \abs{ h(z_1) - h(z_2) } < \infty.
$$
Now, if one sets $\H = \{ h: \Osc(h) \leq 1 \}$, then
the \citeauthor{Dobrushin:TPA1956} contraction coefficient  \eqref{eq:contraction-coefficient-def} has a second characterization,
\begin{equation*}
    \delta(K) = \sup_{\substack{x_1, x_2 \in \X \\ h \in \H }}  \abs{ (K h)(x_1) - (K h)(x_2) }.
\end{equation*}
This tells us in turn that for any Markov transition kernel $K$ on $\X$ and for any bounded measurable function $h:\X \rightarrow \R$,
one has the oscillation inequality
\begin{equation}\label{eq:oscillation-inequality}
\Osc ( K h ) \leq \delta(K) \Osc( h ).
\end{equation}

This bound is especially useful when it is applied to the multi-step kernel given by
$
\Kn_{i,j}=\Kn_{i,i+1} \Kn_{i+1,i+2} \cdots \Kn_{j-1,j}.
$
In this case, the oscillation inequality \eqref{eq:oscillation-inequality}
and the upper bound \eqref{eq:contraction-coeff-bound-1} combine to give us
\begin{equation}\label{eq:oscillation-inequality-multistep}
\Osc ( \Kn_{i,j} h ) \leq \delta(\Kn_{i,j}) \Osc( h ) \leq (1-\alpha_n)^{j-i} \Osc(h).
\end{equation}
This basic bound will be used many times in the analysis of Section \ref{se:proof1-oscillation}.

\section{Connecting a Martingale to $S_n$}\label{se:martingale}

Our proof of Theorem \ref{th:CLT-nonhomogeneous-chain} exploits a martingale approximation like the one used by
\citet{SetVar:EJP2005} in their proof of the \citeauthor{Dobrushin:TPA1956} central limit theorem.
Closely related plans have
been used by \citet{Gordin:DANSSSR1969},
\citet{KipnisVaradhan:CMP1986}, \citet{Kif:TAMS1998},
\citet{WuWoodroofe:AP2004}, \citet{GordinPeligrad:BE2011}, and \citet{Pel:PTRF2012}, but prior to
\citet{SetVar:EJP2005} the martingale approximation method
seems to have been used only for stationary processes.

Here we only need a basic version of the CLT for an array of martingale difference sequences (MDS) that we
frame as a proposition. This version is easily covered by any of the martingale central limit theorems of
\citet{Bro:AMS1971}, \citet{McLei:AP1974}, or \citet[Corollary 3.1]{HalHey:AP1980}.

\begin{proposition}[Basic CLT for MDS Arrays]\label{pr:HallHeyde-MartingaleCLT}

If for each $n \geq 1$, one has a martingale difference sequence $\{ \xi_{n,i}: 1 \leq i \leq n\}$ with respect to
the filtration
$\{\G_{n,i} : 0 \leq i \leq n\}$,
and if one also has the  negligibility condition
\begin{equation}\label{eq:asymtotic-negligibility}
\max_{1 \leq i \leq n} \norm{ \, \xi_{n,i}\, }_\infty \longrightarrow 0  \quad \quad \text{ as } n \rightarrow \infty,
\end{equation}
then the ``weak  law of large numbers" for the conditional variances
\begin{equation}\label{eq:LLN-conditional-variances}
\sum_{i=1}^n \E[ \xi_{n,i}^2 \,|\, \G_{n,i-1}]  \stackrel{ p}{\longrightarrow}  1 \quad \quad \text{ as } n \rightarrow \infty,
\end{equation}
implies that one has convergence in distribution to a standard normal,
$$
\sum_{i=1}^n \xi_{n,i}\Longrightarrow N(0,1) \quad \quad \text{  as } n \rightarrow \infty.
$$
\end{proposition}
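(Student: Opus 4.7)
Plan: The proposition is framed as a black-box invocation of a classical MDS-CLT, so the plan is simply to verify that our hypotheses are stronger than those of any of the cited theorems. Such a theorem (e.g.\ Hall--Heyde, Corollary 3.1) requires, in addition to \eqref{eq:LLN-conditional-variances}, the conditional Lindeberg condition
\begin{equation*}
    \sum_{i=1}^n \E\bigl[\xi_{n,i}^2 \1(|\xi_{n,i}| > \epsilon) \,\big|\, \G_{n,i-1}\bigr] \stackrel{p}{\longrightarrow} 0 \quad \text{for every } \epsilon > 0,
\end{equation*}
and this is trivially implied by the uniform negligibility \eqref{eq:asymtotic-negligibility}: for fixed $\epsilon > 0$ and $n$ large enough that $\max_i \norm{\xi_{n,i}}_\infty < \epsilon$, the indicators vanish pointwise and the sum is identically zero. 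With both hypotheses in hand, the classical theorem delivers $\sum_i \xi_{n,i} \Rightarrow N(0,1)$.

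For completeness one may derive the result directly via McLeish's product-martingale method. Fix $t \in \R$, form $T_n(t) = \prod_{i=1}^n (1 + it\xi_{n,i})$, and note that $\E[T_n(t)] = 1$ by the MDS property. The elementary identity $(1+itx)\exp(-t^2 x^2/2) = \exp(itx + O(|tx|^3))$, valid uniformly for $|tx|$ small, yields the pointwise decomposition
\begin{equation*}
 \exp\Big(it \sum_{i=1}^n \xi_{n,i}\Big) = T_n(t)\,\exp\Big(\!-\tfrac{t^2}{2} V_n\Big)\exp\bigl(R_n(t)\bigr),
\end{equation*}
with $V_n = \sum_i \xi_{n,i}^2$ and $|R_n(t)| \leq C(t)\, \max_i |\xi_{n,i}| \cdot V_n$. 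Taking expectations and invoking $V_n \stackrel{p}{\to} 1$ (which follows because $V_n$ and $\sum_i \E[\xi_{n,i}^2 \,|\, \G_{n,i-1}]$ differ by an MDS whose quadratic variation vanishes under \eqref{eq:asymtotic-negligibility} and \eqref{eq:LLN-conditional-variances}) gives $\E[\exp(it \sum_i \xi_{n,i})] \to e^{-t^2/2}$, and L\'evy's continuity theorem finishes the argument.

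The main obstacle in executing this self-contained version is the simultaneous bookkeeping of $T_n$, $V_n$, and $R_n$: one needs the pointwise bound $|T_n(t)|^2 = \prod_i (1+t^2 \xi_{n,i}^2) \leq \exp(t^2 V_n)$ so that $T_n(t) e^{-t^2 V_n/2}$ is uniformly bounded in modulus, followed by a mild truncation --- for instance, stopping at the first $k$ with $\sum_{i \leq k} \xi_{n,i}^2 > 2$ --- to translate the convergence $V_n \stackrel{p}{\to} 1$ into convergence of the expectation. This is precisely the content of the Brown--McLeish lemma, and is the reason the cleanest exposition is simply to quote the classical MDS-CLT rather than reproduce its proof.
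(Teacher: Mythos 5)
Your proposal is correct and matches the paper's treatment: the paper offers no proof of Proposition \ref{pr:HallHeyde-MartingaleCLT} beyond citing Brown, McLeish, and Hall--Heyde (Corollary 3.1), and your observation that the uniform $L^\infty$ negligibility \eqref{eq:asymtotic-negligibility} trivially implies the conditional Lindeberg condition is exactly the verification needed to invoke that corollary (with constant limit $\eta^2=1$, so no nesting of the $\sigma$-fields is required). The McLeish-style sketch you append is a correct outline but is not needed once the classical theorem is quoted.
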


\subsection*{A Martingale for a Non-Homogenous Chain}

We let $\F_{n,0}$ be the trivial $\sigma$-field, and
we set $\F_{n,i} = \sigma\{X_{n,1}, X_{n,2}, \ldots, X_{n,i}\}$ for $1 \leq i \leq n+m$.
Further, we define the \emph{value to-go process} $\{V_{n,i}: m \leq i \leq n+m\}$
by setting $V_{n,n+m}=0$ and by letting
\begin{equation}\label{eq:value-togo}
V_{n,i} =  \sum_{j=i+1-m}^{n}  \E [Z_{n,j}  \,|\, \F_{n,i} ], \quad \quad \text{for } m \leq i < n+m.
\end{equation}

If we view the random variable $Z_{n,j}$ as a reward that we receive at time $j$, then the value to-go $V_{n,i}$ at time $i$
is the conditional expectation at time $i$ of the total of the rewards that
stand to be collected during the time interval $\{i+1-m, \ldots, n\}$.
For $1 + m \leq i \leq n + m $ we then let
\begin{equation}\label{eq:MDS}
d_{n,i}  = V_{n,i} - V_{n,i-1} +Z_{n,i-m},
\end{equation}
and one can check directly from the definition that
$\{d_{n,i}: 1 + m \leq i \leq n + m \} $ is a martingale difference sequence (MDS)
with respect to its natural filtration $\{ \F_{n,i}: 1 + m \leq i \leq n + m \}$.

When we sum the terms of \eqref{eq:MDS}, the summands $V_{n,i} - V_{n,i-1}$ telescope, and
we are left with the basic decomposition
\begin{equation}\label{eq:Sn-decomposition}
S_n = \sum_{i=1}^n Z_{n,i} =  V_{n,m}+ \sum_{i=1+m}^{n+m}  d_{n,i} .
\end{equation}
For the proof of Theorem \ref{th:CLT-nonhomogeneous-chain}, we assume without loss of generality that
$\E[Z_{n,i}]=0$ for all $1 \leq i \leq n$.
Naturally, in this case we also have $\E[S_n]=\E[V_{n,m}]=0$ since the sum of the martingale differences
in \eqref{eq:Sn-decomposition} will always have total expectation zero. We now just need to analyze the
components of the representation \eqref{eq:Sn-decomposition}.

\section{Oscillation Estimates}\label{se:proof1-oscillation}

The first step in the proof of Theorem \ref{th:CLT-nonhomogeneous-chain} is to argue that the summand $V_{n,m}$
in \eqref{eq:Sn-decomposition} makes a contribution to $S_n$ that is asymptotically negligible when compared to the
standard deviation of $S_n$.  Once this is done,
one can use the martingale CLT to deal with the last sum in \eqref{eq:Sn-decomposition}. Both of these
steps depend on oscillation estimates that exploit the multiplicative bound \eqref{eq:oscillation-inequality-multistep} on
the \citeauthor{Dobrushin:TPA1956} contraction coefficient.

For any random variable $X$ one has the trivial bound
\begin{equation}\label{eq:OscTrivial}
\Osc(X)=\esssup( X )-\essinf( X ) \leq 2 \norm{X}_\infty,
\end{equation}
together with its partial converse,
\begin{equation}\label{eq:OscTrivial2}
\norm{X-\E[X]}_\infty \leq \Osc(X).
\end{equation}
Moreover for any two $\sigma$-fields $\I \subseteq \I'$ of the Borel sets $\B (\X)$,
the conditional expectation is a contraction for the oscillation semi-norm; that is, one has
\begin{equation}\label{ex:CondExpOsc}
\Osc ( \E[X \,| \, \I] ) \leq \Osc ( \E[X \,| \, \I'] ) \leq \Osc (X).
\end{equation}
Also, by comparison of $X(\omega)Y(\omega)$ and $X(\omega')Y(\omega')$, one has the product rule
\begin{equation}\label{eq:productrule}
\Osc(XY) \leq \norm{X}_\infty \Osc(Y) +\norm{Y}_\infty \Osc(X).
\end{equation}

In the next two lemmas we assume
that there is a constant $C_n < \infty$ such that
\begin{equation*}
\norm{ f_{n,i} }_\infty \leq C_n \quad \quad \text{for all } 1 \leq i \leq n.
\end{equation*}
Since $Z_{n,i} = f_{n,i}(X_{n,i}, \ldots, X_{n,i+m})$ and $\E[Z_{n,i}]= 0$, this assumption gives us
\begin{equation}\label{eq:LinftyOscBoundsZ}
\norm{Z_{n,i}}_\infty \leq C_n, \quad \text{and} \quad \Osc (\E[Z_{n,i} \,| \, \I] )  \leq 2 C_n
\end{equation}
for any $\sigma$-field $\I \subseteq \B(\X)$.

\subsection*{Oscillation Bounds on Conditional Moments}

\begin{lemma}[Conditional Moments]\label{lm:OscBounds}
For all
$1 \leq i < j \leq n$ one has
\begin{equation}\label{eq:Linfty-bound}
\norm{ \E[Z_{n,j} \, | \, \F_{n,i}] }_\infty
    \leq \Osc( \E[Z_{n,j} \, | \, \F_{n,i}] )
   \leq 2 C_n (1 - \alpha_n)^{j-i},
\end{equation}
and
\begin{equation}\label{eq:Oscillation-bound-squared}
\Osc(  \E[ Z_{n,j}^2 \,|\, \F_{n,i}] )  \leq 2 C_n^2 (1 - \alpha_n)^{j-i}.
\end{equation}
\end{lemma}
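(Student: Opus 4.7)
Both bounds are instances of the same reduction: transport the multivariate summand $Z_{n,j}$ (or $Z_{n,j}^{2}$) into a bounded function of the single marginal $X_{n,j}$ by integrating out the future coordinates $X_{n,j+1}, \ldots, X_{n,j+m}$, and then carry that function back to time $i$ through the multi-step kernel $\Kn_{i,j}$, where the contraction estimate \eqref{eq:oscillation-inequality-multistep} yields the geometric factor $(1-\alpha_n)^{j-i}$. The assumption $i<j$ is what makes this reduction work, since then every argument of $f_{n,j}$ lives strictly after time $i$.

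\textbf{Step 1 (collapsing $f_{n,j}$ to one variable).} For $i<j\le n$, define
\[
\tilde f_{n,j}(x) = \E\bigl[\, f_{n,j}(X_{n,j}, X_{n,j+1}, \ldots, X_{n,j+m}) \,\big|\, X_{n,j} = x \,\bigr],
\]
which is well-defined because the conditional law of $(X_{n,j+1},\ldots,X_{n,j+m})$ given $X_{n,j}=x$ is determined by the forward kernels $\Kn_{j,j+1},\ldots,\Kn_{j+m-1,j+m}$. Since $\|f_{n,j}\|_\infty\le C_n$, one has $\|\tilde f_{n,j}\|_\infty\le C_n$, and hence $\Osc(\tilde f_{n,j})\le 2C_n$ by \eqref{eq:OscTrivial}. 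The tower property together with the Markov property then gives
\[
\E[Z_{n,j}\,|\,\F_{n,i}] \;=\; \E\bigl[\tilde f_{n,j}(X_{n,j}) \,\big|\, X_{n,i}\bigr] \;=\; (\Kn_{i,j}\tilde f_{n,j})(X_{n,i}).
\]

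\textbf{Step 2 (applying the contraction).} Because pointwise evaluation of a function at a random variable cannot increase the essential oscillation, and using the multi-step oscillation bound \eqref{eq:oscillation-inequality-multistep},
\[
\Osc\bigl(\E[Z_{n,j}\,|\,\F_{n,i}]\bigr) \;\le\; \Osc(\Kn_{i,j}\tilde f_{n,j}) \;\le\; (1-\alpha_n)^{j-i}\Osc(\tilde f_{n,j}) \;\le\; 2C_n(1-\alpha_n)^{j-i},
\]
which is the second inequality of \eqref{eq:Linfty-bound}. The first inequality of \eqref{eq:Linfty-bound} then follows from the partial converse \eqref{eq:OscTrivial2} applied to $X=\E[Z_{n,j}\,|\,\F_{n,i}]$, whose expectation is $\E[Z_{n,j}]=0$ by our centering assumption.

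\textbf{Step 3 (the squared version).} Repeat the construction with $f_{n,j}^{2}$ in place of $f_{n,j}$: set
\[
\tilde g_{n,j}(x) = \E\bigl[\, f_{n,j}(X_{n,j},\ldots,X_{n,j+m})^{2} \,\big|\, X_{n,j}=x \,\bigr],
\]
so $\|\tilde g_{n,j}\|_\infty\le C_n^{2}$ and $\Osc(\tilde g_{n,j})\le 2C_n^{2}$. The same Markov reduction gives $\E[Z_{n,j}^{2}\,|\,\F_{n,i}] = (\Kn_{i,j}\tilde g_{n,j})(X_{n,i})$, and a second application of \eqref{eq:oscillation-inequality-multistep} yields \eqref{eq:Oscillation-bound-squared}. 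There is no substantive obstacle here; the only care needed is the Step 1 reduction, which loses nothing in the supremum bound precisely because all of $X_{n,j},\ldots,X_{n,j+m}$ are post-$i$.
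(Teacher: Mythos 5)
Your proof is correct and follows essentially the same route as the paper: collapse $Z_{n,j}$ (resp.\ $Z_{n,j}^2$) to a function of $X_{n,j}$ alone via the Markov property (the paper's $h_j$ and $s_j$ are exactly your $\tilde f_{n,j}$ and $\tilde g_{n,j}$), use the pullback identity through $\Kn_{i,j}$, and apply the multi-step oscillation bound \eqref{eq:oscillation-inequality-multistep}, with \eqref{eq:OscTrivial2} and the centering $\E[Z_{n,j}]=0$ giving the first inequality of \eqref{eq:Linfty-bound}. No gaps.
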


\begin{proof}
Since $\E[Z_{n,j} \, | \, \F_{n,i}]$ has mean zero,
the first inequality of \eqref{eq:Linfty-bound} is immediate from \eqref{eq:OscTrivial2}.
To get the
second inequality, we note by the Markov property that we can
define a function  $h_j$  on the support of $X_{n,j}$ by setting
\begin{equation*}
h_j(X_{n,j}) =\E[Z_{n,j}\, | \, \F_{n,j}],
\end{equation*}
and by \eqref{eq:LinftyOscBoundsZ} we have the bound $\Osc(h_j) \leq 2C_n$.
For $i< j$ a second use of the Markov property gives us the pullback identity
$$
\E[Z_{n,j} \, | \, \F_{n,i}] = (\Kn_{i,j}h_j)(X_{n,i}),
$$
so the bound \eqref{eq:oscillation-inequality-multistep} gives us
$$
\Osc(\Kn_{i,j}  h_j) \leq 2 C_n ( 1 - \alpha_n)^{j-i},
$$
and this is all we need to complete the proof of \eqref{eq:Linfty-bound}.

One can prove \eqref{eq:Oscillation-bound-squared} by essentially the same method, but
now we define a map $x \mapsto s_j(x)$ by setting
\begin{equation*}
s_j(X_{n,j}) = \E[Z_{n,j}^2\, | \, \F_{n,j}],
\end{equation*}
so for $i < j$ the pullback identity becomes
$$
\E[Z_{n,j}^2 \, | \, \F_{n,i}] = (\Kn_{i,j}s_j)(X_{n,i}).
$$
By \eqref{ex:CondExpOsc} we have $\Osc(s_j) \leq  \Osc(Z_{n,j}^2)$, so \eqref{eq:LinftyOscBoundsZ}
implies $\Osc(s_j) \leq 2 C_n^2$, and the
inequality \eqref{eq:oscillation-inequality-multistep} then gives us \eqref{eq:Oscillation-bound-squared}.
\end{proof}

\subsection*{Oscillation Bounds on Conditional Cross Moments}

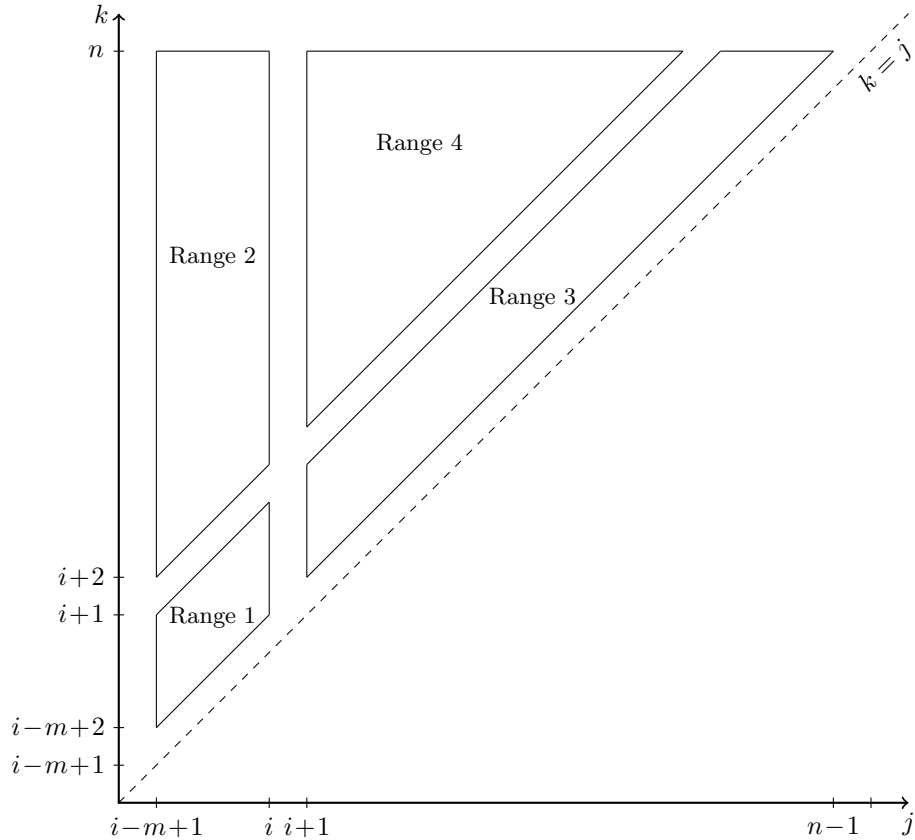
\begin{figure}[t]
\caption{Cross Moments Index Relations}\label{fig:indices}
\begin{tikzpicture}[y=.5cm, x=.5cm]
    \centering
     \draw [<->,thick] (0,21) node (yaxis) [left] {$k$}
        |- (21,0) node (xaxis) [below] {$j$};
    	
     	\draw (1,2pt) -- (1,-2pt)
     		node[anchor=north] {$i\!-\!m\!+\!1$};

        \draw (4,2pt) -- (4,-2pt)
     		node[anchor=north] {$i$};

    	\draw (5,2pt) -- (5,-2pt)
     		node[anchor=north] {$i\!+\!1$};

        \draw (19,2pt) -- (19,-2pt)
     		node[anchor=north] {$n\!-\!1$};

        \draw (20,2pt) -- (20,-2pt);


       	\draw (2pt,1) -- (-2pt,1)
     		node[anchor=east] {$i\!-\!m\!+\!1$};

     	\draw (2pt,2) -- (-2pt,2)
     		node[anchor=east] {$i\!-\!m\!+\!2$};

        \draw (2pt,5) -- (-2pt,5)
     		node[anchor=east] {$i\!+\!1$};

    	\draw (2pt,6) -- (-2pt,6)
     		node[anchor=east] {$i\!+\!2$};

        \draw (2pt,20) -- (-2pt,20)
     		node[anchor=east] {$n$};

    \draw (1,2)  -- (4,5) ;
    \draw (1,5)  -- (4,8) ;
    \draw (1,2)  -- (1,5) ;
    \draw (4,5)  -- (4,8) ;
    \node [anchor=south] at (2.5, 4.4) {\small Range 1};

    \draw (1,6)  -- (4,9) ;
    \draw (1,20)  -- (4,20) ;
    \draw (1,6)  -- (1,20) ;
    \draw (4,9)  -- (4,20) ;
    \node [anchor=south] at (2.5, 14) {\small Range 2};

    \draw (5,6)  -- (19,20) ;
    \draw (5,9)  -- (16,20) ;
    \draw (16,20)-- (19,20);
    \draw (5,6)  -- (5,9) ;
    \node [anchor=south] at (11, 12.9) {\small Range 3};

    \draw (5,10)  -- (15,20) ;
    \draw (5,20)  -- (15,20) ;
    \draw (5,10)  -- (5,20) ;
    \node [anchor=south] at (8, 17) {\small Range 4};

    \draw[dashed] (0,0) -- (21,21)
        node[rotate=45,anchor=north] at (20,20) {$k=j$ };

\end{tikzpicture}
\footnotetext{ The estimates  in Lemma \ref{lm:oscillation-bounds} require attention to certain ranges of indices.
In turn, these amount to a decomposition of the lattice triangle defined by the upper-left half of
$\{1,2, \ldots,n\}\times \{1,2, \ldots,n\}$.}
\end{figure}

The minimal ergodic coefficient $\alpha_n$ can also be used to
control the oscillation of the conditional expectations of the products $Z_{n,j}  Z_{n,k}$
given $\F_{n,i}$.
All of the inequalities that we need tell a similar story,
but the specific bounds have an inescapable dependence on the
relative values of $i$, $j$, $k$, $n$, and $m$.
Figure \ref{fig:indices} gives a graphical representation
of the constraints on the indices that feature in the next lemma.

\begin{lemma}[Conditional Cross Moments]\label{lm:oscillation-bounds}
For each $i \in \{m, \ldots, n+m\}$ we consider $i -m < j < n$ and $j < k \leq n$.
We then have the following oscillation bounds that depend
on the range of the indices (see also Figure \ref{fig:indices}):

\textsc{Range 1.} If $  j \leq i$ and $ k \leq j+m$ then
    \begin{equation}\label{eq:oscillation-bound-both-inside}
    \Osc( \E[Z_{n,j}  Z_{n,k}  \,|\,  \F_{n,i}] )
         \leq 4C_n^2 .
    \end{equation}

\textsc{Range 2.} If $ j \leq i  < j+m < k $ then
    \begin{equation}\label{eq:oscillation-bound-i-inside}
    \Osc( \E[Z_{n,j}  Z_{n,k}  \,|\,  \F_{n,i}] )
         \leq 6 C_n^2 (1 - \alpha_n)^{k- j - m}.
    \end{equation}

\textsc{Range 3.} If $ i < j < k \leq j+m$ then
    \begin{equation}\label{eq:oscillation-bound-k-inside}
    \Osc( \E[Z_{n,j}  Z_{n,k}  \,|\,  \F_{n,i}] )
         \leq 2 C_n^2 (1 - \alpha_n)^{j- i}.
    \end{equation}

\textsc{Range 4.} If $i < j \leq j+m < k$, then
    \begin{equation} \label{eq:oscillation-bound-both-outside}
    \Osc( \E[Z_{n,j}  Z_{n,k}  \,|\,  \F_{n,i}] )
         \leq 6 C_n^2 (1 - \alpha_n)^{k- i - m}.
    \end{equation}
\end{lemma}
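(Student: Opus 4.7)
The plan is to reduce each of the four cases to a combination of the same two moves. The first move uses the Markov property to realize a conditional expectation $\E[\,\cdot\,|\F_{n,i}]$ of a future-dependent function as the action $(\Kn_{i,j}g)(X_{n,i})$ of a multi-step kernel on a scalar-valued function $g$, after which the contraction bound \eqref{eq:oscillation-inequality-multistep} supplies the factor $(1-\alpha_n)^{j-i}$. The second move, needed whenever $k>j+m$, is to reduce $Z_{n,k}$ to the scalar function $h_k(X_{n,j+m})=\E[Z_{n,k}\,|\,\F_{n,j+m}]$ on the support of $X_{n,j+m}$; by Lemma \ref{lm:OscBounds} both $\norm{h_k}_\infty$ and $\Osc(h_k)$ are at most $2C_n(1-\alpha_n)^{k-j-m}$, which is the origin of the $(1-\alpha_n)^{k-j-m}$ factor in Ranges 2 and 4.

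Range 1 requires neither move: $\norm{Z_{n,j}Z_{n,k}}_\infty\leq C_n^2$ and the product rule \eqref{eq:productrule} give $\Osc(Z_{n,j}Z_{n,k})\leq 4C_n^2$, and the conditional-expectation contraction \eqref{ex:CondExpOsc} passes the estimate to $\E[Z_{n,j}Z_{n,k}|\F_{n,i}]$. Range 3 uses only the first move: since $i<j$, the Markov property writes $\E[Z_{n,j}Z_{n,k}|\F_{n,i}]=(\Kn_{i,j}g)(X_{n,i})$ with $g(X_{n,j})=\E[Z_{n,j}Z_{n,k}|\F_{n,j}]$, and the trivial bound \eqref{eq:OscTrivial} together with \eqref{ex:CondExpOsc} gives $\Osc(g)\leq 2C_n^2$; inserting this into \eqref{eq:oscillation-inequality-multistep} yields the required $2C_n^2(1-\alpha_n)^{j-i}$.

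Ranges 2 and 4 combine both moves. In each, I would first replace $Z_{n,k}$ by $h_k(X_{n,j+m})$, using that $Z_{n,j}$ is $\F_{n,j+m}$-measurable to pull out by the tower property, and then apply the product rule with the $h_k$ bounds to obtain
\begin{equation*}
\Osc\bigl(Z_{n,j}\,h_k(X_{n,j+m})\bigr)\leq \norm{Z_{n,j}}_\infty\Osc(h_k)+\norm{h_k}_\infty\Osc(Z_{n,j})\leq 6C_n^2(1-\alpha_n)^{k-j-m}.
\end{equation*}
In Range 2 one has $j\leq i$, so a single application of \eqref{ex:CondExpOsc} to $\E[Z_{n,j}h_k(X_{n,j+m})|\F_{n,i}]$ closes the case. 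In Range 4 one has $i<j$, so I would introduce the scalar function $g(X_{n,j})=\E[Z_{n,j}h_k(X_{n,j+m})|\F_{n,j}]$, pull back as in Range 3 to write $\E[Z_{n,j}Z_{n,k}|\F_{n,i}]=(\Kn_{i,j}g)(X_{n,i})$, use \eqref{ex:CondExpOsc} to inherit the above oscillation bound for $g$, and apply \eqref{eq:oscillation-inequality-multistep} to gain the extra factor $(1-\alpha_n)^{j-i}$; the two exponents combine as $(j-i)+(k-j-m)=k-i-m$, exactly the exponent claimed.

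The main obstacle is organizational rather than analytical: one must choose the intermediate $\sigma$-field in each range so that the identities used are simultaneously pullback identities on $X_{n,j}$ (to unlock \eqref{eq:oscillation-inequality-multistep}) and tower identities at time $j+m$ (to unlock Lemma \ref{lm:OscBounds}), and one must verify in Range 4 that the two contraction factors multiply exactly as needed. No new analytic idea beyond the multiplicative contraction bound, the product rule, and the conditional-mean oscillation bounds of Lemma \ref{lm:OscBounds} appears to be required.
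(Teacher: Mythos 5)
Your proposal is correct and follows essentially the same route as the paper: Range 1 via the product rule and conditional-expectation contraction, Ranges 2 and 4 via the tower property at $\F_{n,j+m}$ combined with the bounds of Lemma \ref{lm:OscBounds} on $\E[Z_{n,k}\,|\,\F_{n,j+m}]$, and Ranges 3 and 4 via the pullback identity through a function of $X_{n,j}$ followed by the multiplicative contraction bound \eqref{eq:oscillation-inequality-multistep}. The constants and the combination of exponents $(j-i)+(k-j-m)=k-i-m$ in Range 4 match the paper's argument exactly.
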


\begin{proof}
Inequality \eqref{eq:oscillation-bound-both-inside} follows immediately
from the product rule \eqref{eq:productrule} and the bounds \eqref{eq:LinftyOscBoundsZ}.
To prove \eqref{eq:oscillation-bound-i-inside}, we note that for $i < j+m$ we have $\F_{n,i}\subseteq \F_{n,j+m}$
so from the monotonicity \eqref{ex:CondExpOsc}
and the fact that $Z_{n,j}$ is $\F_{n,j+m}$-measurable,
we obtain that
$$
\Osc( \E [ Z_{n,j} Z_{n,k} \, | \, \F_{n,i}]) \leq \Osc( \E [ Z_{n,j}  Z_{n,k} \, | \, \F_{n,j+m} ])
= \Osc(  Z_{n,j} \E [ Z_{n,k} \, | \, \F_{n,j+m} ]).
$$
The product rule \eqref{eq:productrule} applied to the quantity on the right-hand side above gives us the inequality
\begin{align*}
\Osc( \E [ Z_{n,j} & Z_{n,k} \, | \, \F_{n,i}]) \\
&\leq \norm{Z_{n,j}}_\infty  \Osc( \E [ Z_{n,k} \, | \, \F_{n,j+m} ])
+ \Osc (Z_{n,j})\norm{ \E [ Z_{n,k} \, | \, \F_{n,j+m} ]}_\infty,
\end{align*}
so if we recall that $\norm{ Z_{n,i}}_\infty \leq C_n$ and that $\Osc(Z_{n,j}) \leq 2 C_n$
and use the conditional moment bounds in \eqref{eq:Linfty-bound} we have
\begin{equation*}
\Osc( \E [ Z_{n,j}  Z_{n,k} \, | \, \F_{n,i}])  \leq 2 C_n^2 (1-\alpha_n)^{k-j-m} + 4 C_n^2 (1-\alpha_n)^{k-j-m},
\end{equation*}
completing the proof of \eqref{eq:oscillation-bound-i-inside}.

To verify inequality \eqref{eq:oscillation-bound-k-inside}, we consider the map $X_{n,j} \mapsto p_j(X_{n,j})$ given by
$$
p_j(X_{n,j}) = \E[Z_{n,j} Z_{n,k} \, | \, \F_{n,j}],
$$
and we note that for  $i < j$ we have the pullback identity
\begin{equation*}
\E[Z_{n,j} Z_{n,k} \, | \, \F_{n,i}]  = (\Kn_{i,j}p_j)(X_{n,i}).
\end{equation*}
Since $\norm{ Z_{n,j} }_\infty$ and $\norm{ Z_{n,k} }_\infty$ are bounded by $C_n$, we have
$\norm{ p_j }_\infty \leq C_n^2$ and $\Osc( p_j ) \leq 2 C_n^2$.
We also have  $i < j < k$ so \eqref{eq:oscillation-inequality-multistep}
tells us that
$$
\Osc(\Kn_{i,j}p_j) \leq \delta(\Kn_{i,j})\Osc(p_j) \leq 2C_n^2 (1-\alpha_n)^{j-i},
$$
completing the proof of \eqref{eq:oscillation-bound-k-inside}.

Finally, for the last inequality \eqref{eq:oscillation-bound-both-outside}
we have $ j \leq j+m <k$,
we consider the map $X_{n,j} \mapsto q_j(X_{n,j})$ defined by setting
$$
q_j(X_{n,j})= \E[ Z_{n,j} (\E[ Z_{n,k} \,| \, \F_{n, j+m}]) \, | \, \F_{n,j} ],
$$
and we obtain the identity
$$
\E[ Z_{n,j} Z_{n,k} \, | \, \F_{n,i} ] = (\Kn_{i,j}q_j)(X_{n,i}).
$$
By the multiplicative bound \eqref{eq:oscillation-inequality-multistep}, this gives us
$$
\Osc(\E[ Z_{n,j} Z_{n,k} \, | \, \F_{n,i} ] ) \leq (1-\alpha_n)^{j-i} \Osc(q_j),
$$
and we also have  $\Osc(q_j) \leq 6 C_n^2 (1 - \alpha_n)^{k-j-m}$ by \eqref{eq:oscillation-bound-i-inside},
so the proof of \eqref{eq:oscillation-bound-both-outside} is also complete.
\end{proof}

\section{The Value To-Go Process and MDS $L^\infty$-Bounds}\label{se:proof2-valueToGo}

We have everything we need to argue that the
variance condition \eqref{eq:asymptotic-condition-for-clt-1} implies the
negligibility condition \eqref{eq:asymtotic-negligibility}.
The first step is to get simple $L^\infty$-estimates of the value to-go  $V_{n,i}$
that was defined in \eqref{eq:value-togo}. We then need estimates of
the martingale difference$d_{n,i}$ defined in \eqref{eq:MDS}.
Here, and subsequently,  we use $M = M(m)$ to denote a Hardy-style constant which depends only on $m$
and which may change from one line to the next.

\begin{lemma}[$L^\infty$-Bounds for the Value To-Go and for the MDS]\label{lm:asymtotic-negligibility}
There is a constant $M< \infty$ such that for all $n\geq 1$ we have
\begin{align}
\norm{ V_{n,i} }_ \infty &\leq M C_n \alpha_n^{-1},
\quad \quad \text{ for } m \leq i \leq n+m,\quad \text{and}  \label{eq:Linfty-bound-value-functions}\\
\norm{ d_{n,i} }_\infty &\leq M C_n \alpha_n^{-1}, \quad \quad \text{ for } 1+m \leq i \leq n+m. \label{eq:Linfty-bound-MDS}
\end{align}
\end{lemma}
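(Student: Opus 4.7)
The plan is to bound $V_{n,i}$ by splitting the defining sum at the cutoff $j=i$ into a bounded number of ``near'' terms and a geometrically summable tail of ``far'' terms, and then to derive the MDS bound as an easy corollary via the triangle inequality applied to the definition of $d_{n,i}$.

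For the value to-go bound \eqref{eq:Linfty-bound-value-functions}, recall that
\begin{equation*}
V_{n,i} = \sum_{j=i+1-m}^{i} \E[Z_{n,j}\,|\,\F_{n,i}] \;+\; \sum_{j=i+1}^{n} \E[Z_{n,j}\,|\,\F_{n,i}],
\end{equation*}
where either sum may be empty depending on whether $i < n$ or $i \geq n$. The first (``near'') sum has at most $m$ terms, and each summand has $L^\infty$-norm at most $\norm{Z_{n,j}}_\infty \leq C_n$ by \eqref{eq:LinftyOscBoundsZ}; hence this piece contributes at most $m C_n$. For the second (``far'') sum, each index satisfies $i < j \leq n$, so Lemma \ref{lm:OscBounds}, specifically \eqref{eq:Linfty-bound}, gives $\norm{\E[Z_{n,j}\,|\,\F_{n,i}]}_\infty \leq 2C_n(1-\alpha_n)^{j-i}$. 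Summing the geometric series,
\begin{equation*}
\sum_{j=i+1}^{n} \norm{\E[Z_{n,j}\,|\,\F_{n,i}]}_\infty \;\leq\; 2 C_n \sum_{k \geq 1} (1-\alpha_n)^k \;\leq\; 2 C_n \alpha_n^{-1}.
\end{equation*}
Combining the two bounds and using $\alpha_n^{-1} \geq 1$, we obtain $\norm{V_{n,i}}_\infty \leq (m+2)\, C_n\, \alpha_n^{-1}$, which is \eqref{eq:Linfty-bound-value-functions} with $M = m+2$.

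The MDS bound \eqref{eq:Linfty-bound-MDS} then follows by the triangle inequality applied to $d_{n,i} = V_{n,i} - V_{n,i-1} + Z_{n,i-m}$: each of the value-to-go terms is controlled by \eqref{eq:Linfty-bound-value-functions}, and $\norm{Z_{n,i-m}}_\infty \leq C_n \leq C_n \alpha_n^{-1}$, so the constant can be absorbed into $M$.

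There is no real obstacle here; the only thing to keep straight is the bookkeeping on the range of $j$, in particular that the ``near'' block always has at most $m$ terms (even when $i$ lies in the boundary region $n \leq i \leq n+m$, where the far sum is empty). The essential geometric decay, which powers the whole bound, is already encoded in Lemma \ref{lm:OscBounds}; Lemma \ref{lm:asymtotic-negligibility} is simply the straightforward termwise consequence of it.
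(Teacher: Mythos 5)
Your proposal is correct and follows essentially the same route as the paper: bound the first $m$ summands of $V_{n,i}$ trivially by $C_n$ each, control the remaining terms with the geometric decay from \eqref{eq:Linfty-bound}, sum the series to get $M C_n \alpha_n^{-1}$, and then obtain the MDS bound from the triangle inequality applied to \eqref{eq:MDS}. The only cosmetic difference is the explicit attention to the empty-tail boundary cases and the particular value of $M$, neither of which changes the argument.
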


\begin{proof}
We have $\norm{Z_{n,j}}_\infty \leq C_n$, and when we use this estimate on the
first $m$ summands in the definition \eqref{eq:value-togo} of the value to-go $V_{n,i}$ we get the bound
$$
\norm{ V_{n,i} }_\infty \leq m \, C_n + \sum_{j=i+1}^{n} \norm{ \E [Z_{n,j}  \,|\, \F_{n,i} ] }_\infty.
$$
From \eqref{eq:Linfty-bound} we know that
$\norm{ \E [Z_{n,j}  \,|\, \F_{n,i} ] }_\infty \leq 2 C_n (1-\alpha_n)^{j-i}$
for all $1 \leq i < j \leq n$ so, after completing the geometric series, we have
$$
\norm{ V_{n,i} }_\infty \leq m \, C_n + 2 C_n \alpha_n^{-1} \leq M C_n \alpha_n^{-1},
$$
where one can take $M = 2m$ as a generous choice for $M$.
This bound, the representation \eqref{eq:MDS}, and the triangle inequality then
give us \eqref{eq:Linfty-bound-MDS}.
\end{proof}


\subsection*{Conditional Variances $L^2$-Bounds}

Everything is also in place to show that the variance condition \eqref{eq:asymptotic-condition-for-clt-1}
gives one the weak law of large numbers for the conditional variances
\eqref{eq:LLN-conditional-variances}.
We begin by deriving some basic inequalities for the variance of $S_n$.

\begin{lemma}[Variance Bounds]
For all $n \geq 1$ we have
\begin{equation}\label{eq:ExpSnSq}
\E[ S_n^2]=\E[ V_{n,m}^2] + \sum_{j=1+m}^{n+m} \E[d_{n,j}^2], \quad \text{and}
\end{equation}
\begin{equation}\label{eq:variance-mds-bounds}
\Var[S_n] - M C_n^2 \alpha_n^{-2} \leq \sum_{j=1+m}^{n+m} \E[  d_{n,j}^2] \leq \Var[S_n].
\end{equation}
\end{lemma}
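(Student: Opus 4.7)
The plan is to start from the basic decomposition \eqref{eq:Sn-decomposition}, which under our centering assumption $\E[Z_{n,i}] = 0$ reads $S_n = V_{n,m} + \sum_{j=1+m}^{n+m} d_{n,j}$ with $\E[S_n] = \E[V_{n,m}] = 0$. To obtain \eqref{eq:ExpSnSq} I would square this decomposition and take expectations, then show that all the cross terms vanish. The martingale difference terms are pairwise orthogonal in $L^2$ by the standard MDS argument: for $1+m \leq j < k \leq n+m$, $\E[d_{n,j} d_{n,k}] = \E[d_{n,j}\E[d_{n,k} \mid \F_{n,k-1}]] = 0$. The cross terms between $V_{n,m}$ and the MDS are also zero because $V_{n,m}$ is $\F_{n,m}$-measurable and $\F_{n,m} \subseteq \F_{n,j-1}$ for every $j \geq 1+m$, so $\E[V_{n,m} d_{n,j}] = \E[V_{n,m}\E[d_{n,j} \mid \F_{n,j-1}]] = 0$. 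What remains after the cancellations is exactly \eqref{eq:ExpSnSq}.

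Since $\E[S_n] = 0$, identity \eqref{eq:ExpSnSq} can be rewritten as
\begin{equation*}
\Var[S_n] = \E[V_{n,m}^2] + \sum_{j=1+m}^{n+m}\E[d_{n,j}^2].
\end{equation*}
The upper bound $\sum_{j} \E[d_{n,j}^2] \leq \Var[S_n]$ in \eqref{eq:variance-mds-bounds} is then immediate because $\E[V_{n,m}^2] \geq 0$. For the lower bound I would invoke the $L^\infty$-bound on $V_{n,m}$ from Lemma \ref{lm:asymtotic-negligibility}, which gives $\norm{V_{n,m}}_\infty \leq M C_n \alpha_n^{-1}$, and hence $\E[V_{n,m}^2] \leq M^2 C_n^2 \alpha_n^{-2}$. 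Reabsorbing the constant into $M$, this yields $\sum_{j} \E[d_{n,j}^2] \geq \Var[S_n] - M C_n^2 \alpha_n^{-2}$, which is the remaining half of \eqref{eq:variance-mds-bounds}.

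There is no real obstacle here; the only thing to be careful about is keeping track of which $\sigma$-field to condition against so that the cross-term cancellations are justified, and making sure the additive term $V_{n,m}$ is handled by Lemma \ref{lm:asymtotic-negligibility} rather than by a fresh estimate. Everything else is a routine application of the MDS orthogonality and of the bound already proved in the previous subsection.
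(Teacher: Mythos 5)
Your proposal is correct and follows essentially the same route as the paper: square the decomposition \eqref{eq:Sn-decomposition}, kill the cross terms via MDS orthogonality and the $\F_{n,m}$-measurability of $V_{n,m}$, and then bound $\E[V_{n,m}^2]$ with the $L^\infty$-estimate \eqref{eq:Linfty-bound-value-functions}. The only cosmetic difference is that the paper first conditions on $\F_{n,m}$ before taking total expectation, whereas you apply the tower property term by term; the two are equivalent.
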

\begin{proof} When we square both sides of \eqref{eq:Sn-decomposition} we have
$$
S_n^2 = V_{n,m}^2 +  2 V_{n,m} \bigg\{\sum_{j=1+m}^{n+m} d_{n,j}\bigg\} + \bigg\{\sum_{j=1+m}^{n+m} d_{n,j}\bigg\}^2.
$$
Since $V_{n,m}$ is $\F_{n,m}$-measurable, we obtain from the conditional orthogonality of the martingale differences that
$$
\E[S_n^2 \, | \, \F_{n,m}] = V_{n,m}^2 + \sum_{j=1+m}^{n+m} \E[ d_{n,j}^2 \, | \, \F_{n,m}],
$$
and, when we take the total expectation, we then get  \eqref{eq:ExpSnSq}.
Finally, since $\E[ S_n]=0$, the representation \eqref{eq:ExpSnSq} and
the bound \eqref{eq:Linfty-bound-value-functions} for $\norm{V_{n,m}}_\infty$
give us the two inequalities of \eqref{eq:variance-mds-bounds}.
\end{proof}

\begin{lemma}[Oscillation Bound]\label{lm:oscillation-mds}
There is a constant $M < \infty$ such that
\begin{equation}\label{eq:oscillation-bound-sum-dtsquared}
\Osc( \sum_{j = 1+i}^{n+m} \E[  d_{n,j}^2 \,|\, \F_{n,i}] ) \leq M C_n^2 \alpha_n^{-2}
\quad \quad \text{for } m \leq i \leq n+m.
\end{equation}
\end{lemma}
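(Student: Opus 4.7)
The plan is to exploit martingale orthogonality to reduce the claim to an oscillation estimate for a conditional second moment of a partial tail sum of the $Z_{n,k}$'s, which can then be controlled term-by-term through the cross-moment bounds of Lemma~\ref{lm:oscillation-bounds}.

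Concretely, since $\F_{n,i} \subseteq \F_{n,j-1}$ for every $j > i$ and $\{d_{n,j}\}$ is a martingale difference sequence, conditional orthogonality gives
\[
\sum_{j=1+i}^{n+m} \E[d_{n,j}^2 \,|\, \F_{n,i}] \;=\; \E\!\left[\,W_i^2 \,\big|\, \F_{n,i}\right],
\qquad W_i = \sum_{j=1+i}^{n+m} d_{n,j}.
\]
Using \eqref{eq:MDS} and telescoping the differences $V_{n,j}-V_{n,j-1}$ with $V_{n,n+m}=0$, one obtains $W_i = \sum_{k=i+1-m}^n Z_{n,k} - V_{n,i}$. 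Since $V_{n,i}$ is by \eqref{eq:value-togo} exactly the conditional expectation of that tail sum, the elementary variance identity reduces the quantity we need to bound to
\[
\E\!\left[\bigg(\sum_{k=i+1-m}^n Z_{n,k}\bigg)^{\!2}\,\bigg|\,\F_{n,i}\right] \;-\; V_{n,i}^2.
\]
The product rule \eqref{eq:productrule} combined with \eqref{eq:Linfty-bound-value-functions} yields $\Osc(V_{n,i}^2) \leq 2\norm{V_{n,i}}_\infty\Osc(V_{n,i}) \leq 4M^2 C_n^2 \alpha_n^{-2}$, so the task becomes to bound $\sum_{j,k=i+1-m}^n \Osc(\E[Z_{n,j}Z_{n,k}\,|\,\F_{n,i}])$.

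For the diagonal $j=k$, inequality \eqref{eq:Oscillation-bound-squared} supplies a geometric series $\leq 2C_n^2\alpha_n^{-1}$ over the indices with $j>i$, while the trivial bound $\Osc(\E[Z_{n,j}^2\,|\,\F_{n,i}]) \leq \Osc(Z_{n,j}^2) \leq 2C_n^2$ handles the $m$ indices with $j\leq i$. For $j\neq k$ we use symmetry and restrict to ordered pairs $j<k$, which split exactly into the four regimes of Lemma~\ref{lm:oscillation-bounds} (Figure~\ref{fig:indices}). Ranges 1--3 each contain at most $O(m)$ or $O(m^2)$ pairs, or a single geometric series in $\alpha_n$, and thus contribute at most $O(m^2 C_n^2 + m C_n^2 \alpha_n^{-1})$. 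Range~4 is the dominant one: after the change of variable $s=k-i-m$, the number of admissible $j$ is $s-1$, and so the total contribution is bounded by $6C_n^2 \sum_{s\geq 1} s(1-\alpha_n)^s \leq 6 C_n^2 \alpha_n^{-2}$. Collecting the four ranges, the diagonal, and the $V_{n,i}^2$ term produces a bound of the form $MC_n^2\alpha_n^{-2}$ for a constant $M=M(m)$.

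The main obstacle is the bookkeeping in range~4: it is the double summation over both $j>i$ and $k>j+m$ that creates an extra $\alpha_n^{-1}$ factor beyond the single geometric series appearing in Dobrushin's original setting, and this is precisely the mechanism that makes the quadratic exponent in the variance hypothesis \eqref{eq:asymptotic-condition-for-clt-1} of Theorem~\ref{th:CLT-nonhomogeneous-chain} unavoidable here. The remaining estimates are routine applications of \eqref{eq:oscillation-inequality-multistep} and the product rule \eqref{eq:productrule} that have already been packaged into Lemmas~\ref{lm:OscBounds} and \ref{lm:oscillation-bounds}.
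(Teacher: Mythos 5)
Your proposal is correct and follows essentially the same route as the paper: the martingale orthogonality identity reducing $\sum_{j}\E[d_{n,j}^2\,|\,\F_{n,i}]$ to $\E\big[\big(\sum_k Z_{n,k}\big)^2\,\big|\,\F_{n,i}\big]-V_{n,i}^2$, the bound $\Osc(V_{n,i}^2)=O(C_n^2\alpha_n^{-2})$ via Lemma \ref{lm:asymtotic-negligibility}, and the diagonal-plus-four-range decomposition of the cross terms with Range 4 supplying the dominant $C_n^2\alpha_n^{-2}$ contribution are exactly the steps of the paper's proof. The only cosmetic differences are that you phrase the key identity as a conditional variance identity rather than squaring the telescoped sum directly, and you bound $\Osc(V_{n,i}^2)$ via the product rule rather than via $2\norm{V_{n,i}^2}_\infty$; neither changes the substance.
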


\begin{proof}
If we sum the identity \eqref{eq:MDS} we have
$$
\sum_{j=1+i}^{n+m} Z_{n,j-m}= V_{n,i} +  \sum_{j=1+i}^{n+m} d_{n,j},
$$
so, when we square both sides and use the fact that $V_{n,i}$ is $\F_{n,i}$-measurable, the orthogonality of the martingale differences
gives us
$$
\E\big[\big\{\sum_{j=1+i}^{n+m} Z_{n,j-m}  \big\}^2 \, | \, \F_{n,i}  \big] =V_{n,i}^2 + \sum_{j=i+1}^n \E[d_{n,j}^2 \, |\, \F_{n,i}].
$$
The triangle inequality then implies
\begin{equation}\label{eq:oscillation-bound-mds-squared}
\Osc \big( \sum_{j = i+1}^{n+m} \E[ d_{n,j}^2 \,|\, \F_{n,i}] \big)
 \leq  \Osc( V_{n,i}^2 ) + \Osc \big(  \E\big[\big\{\sum_{j=1+i}^{n+m} Z_{n,j-m}  \big\}^2 \, | \, \F_{n,i}  \big]  \big) .
\end{equation}
By \eqref{eq:Linfty-bound-value-functions} we have $\norm{ V_{n,i} }_ \infty \leq M C_n \alpha_n^{-1}$  so, by \eqref{eq:OscTrivial},
we obtain
\begin{equation}\label{eq:oscillation-bound-vi-squared}
\Osc(V_{n,i}^2) \leq 2 \norm{ V_{n,i}^2 }_ \infty \leq M C_n^2 \alpha_n^{-2}.
\end{equation}

It only remains to estimate the second summand of \eqref{eq:oscillation-bound-mds-squared}, but this takes some work.
Specifically, we will check that one can write
\begin{equation}\label{eq:oscillation-decomposition-S1S2S3S4S5}
 \Osc (  \E [ \big\{\!\!\! \sum_{j=1+i-m}^n  Z_{n,j}\, \big\}^2 | \F_{n,i} ] )  \leq \S_0 + \S_1 + \S_2 + \S_3 + \S_4.
\end{equation}
where $\S_0, \S_1, \S_2, \S_3$, and $\S_4$ are non-negative sums that one can estimate individually with help from our oscillation bounds.
Here the first term $\S_0$ accounts for the oscillation of the conditional squared moments. It is given by
$$
\S_0 = \sum_{j=1+i-m}^i \Osc (  \E [   Z_{n,j}^2 | \F_{n,i} ] ) + \sum_{j=1+i}^n \Osc (  \E [   Z_{n,j}^2 | \F_{n,i} ] ),
$$
and by \eqref{eq:LinftyOscBoundsZ} and \eqref{eq:Oscillation-bound-squared} we have the estimate
$$
\S_0 \leq 2 m C_n^2 +  2 C_n^2 \sum_{j=1+i}^n (1-\alpha_n)^{j-i} \leq 2(1+m) C_n^2 \alpha_n^{-1}.
$$

The remaining sums $\S_1, \S_2, \S_3$ and $\S_4$ are given by the oscillation
of the conditional cross moments $Z_{n,j}Z_{n,k}$ given $\F_{n,i}$ where
the ranges of the indices $j$ and $k$ are given by the corresponding four regions in Figure \ref{fig:indices}.
Specifically, we have
$$
\S_1 = 2 \sum_{j=1+i-m}^{i} \sum_{k=1+j}^{j+m} \Osc ( \E[  Z_{n,j}  Z_{n,k} \,|\, \F_{n,i} ] ),
$$
and \eqref{eq:oscillation-bound-both-inside} gives us
$\S_1 \leq 8 m^2 C_n^2$
since $\S_1$ has $m^2$ summands.
Next, if we set
$$
\S_2 = 2\!\!\! \sum_{j=1+i-m}^{i} \sum_{k=1+j+m}^{n} \Osc ( \E[  Z_{n,j}  Z_{n,k} \,|\, \F_{n,i} ] )
$$
then the oscillation inequality \eqref{eq:oscillation-bound-i-inside} gives us
$$
\S_2 \leq 12 C_n^2 \sum_{j=1+i-m}^{i} \sum_{k=1+j+m}^{n} (1-\alpha_n)^{k-j-m}
     \leq 12 m C_n^2 \alpha_n^{-1}.
$$
Similarly, for the third region, the bound \eqref{eq:oscillation-bound-k-inside} gives us
\begin{align*}
\S_3 &= 2 \sum_{j=1+i}^{n} \sum_{k=1+j}^{j+m} \Osc (  \E [ Z_{n,j} Z_{n,k} \,|\, \F_{n,i} ] ) \\
&\leq 4 C_n^2 \sum_{j=1+i}^{n} \sum_{k=1+j}^{j+m} (1-\alpha_n)^{j-i}
     \leq 4 m C_n^2 \alpha_n^{-1},
\end{align*}
and, for the fourth region, the bound \eqref{eq:oscillation-bound-both-outside} implies
\begin{align*}
\S_4 &= 2 \sum_{j=1+i}^{n} \sum_{k=1+j+m}^{n} \Osc (  \E [ Z_{n,j} Z_{n,k} \,|\, \F_{n,i} ] ) \\
&\leq 12 C_n^2 \sum_{j=1+i}^{n} \sum_{k=1+j+m}^{n}  (1-\alpha_n)^{k-i-m}  \leq 12 C_n^2 \alpha_n^{-2}.
\end{align*}
Finally, by our decomposition \eqref{eq:oscillation-decomposition-S1S2S3S4S5}, the upper bounds for $\S_0, \S_1, \S_2,\S_3$, and $\S_4$
tell us that there is a constant $M$ for which we have
$$
 \Osc (  \E [ \big\{\!\!\! \sum_{j=1+i-m}^n  Z_{n,j}\, \big\}^2 | \F_{n,i} ] ) \leq M C_n^2 \alpha_n^{-2},
$$
so, given \eqref{eq:oscillation-bound-mds-squared} and \eqref{eq:oscillation-bound-vi-squared}, the proof of the lemma is complete.
\end{proof}

\section{Completion of the Proof of Theorem \ref{th:CLT-nonhomogeneous-chain}}\label{se:proof3-end}

It only remains to argue that if we set
\begin{equation*}
\eta_i = \E[  d_{n,i}^2 \,|\, \F_{n,i-1}] \quad \text{and} \quad \Delta_n=\sum_{i=1+m}^{n+m} (\eta_i - \E[\eta_i]),
\end{equation*}
then the variance condition \eqref{eq:asymptotic-condition-for-clt-1} implies that
$\Delta_n = o(\Var[S_n])$ in probability
as $n \rightarrow \infty$. We can get this as an easy consequence of the next lemma.

\begin{lemma}[$L^2$-Bound for $\Delta_n$]\label{lm:LLN-conditional-variances-new}
There is a constant $M < \infty$ depending only on $m$ such that for all $n \geq 1$ one has the
inequality
\begin{equation*}
\E[ \Delta_n^2 ] = \Var\big[ \{ \sum_{i=1+m}^{n+m} \E[  d_{n,i}^2 \,|\, \F_{n,i-1}] \} \big]
\leq M C_n^2 \alpha_n^{-2} \Var[S_n].
\end{equation*}
\end{lemma}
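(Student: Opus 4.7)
The plan is to expand $\E[\Delta_n^2]$ as a double sum of covariances of the $\eta_i$'s, use the Markov/tower structure to collapse each tail of cross-covariances into a single covariance against the tail-sum $\phi_i := \sum_{j=i+1}^{n+m}\E[d_{n,j}^2 \mid \F_{n,i}]$, and then apply Lemma~\ref{lm:oscillation-mds} to bound everything in terms of $\sum_i \E[d_{n,i}^2]$, which is dominated by $\Var[S_n]$ thanks to \eqref{eq:variance-mds-bounds}. The subtle step is the off-diagonal bound: a direct Cauchy--Schwarz on the cross-covariances bleeds in an extra $\sqrt{n}$ that the lemma cannot afford, and one must instead exploit the non-negativity of $\eta_i$.

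Starting from $\E[\Delta_n^2] = \sum_i \Var(\eta_i) + 2\sum_{i<j}\Cov(\eta_i,\eta_j)$, the diagonal contribution is handled immediately. Since $\eta_i\geq 0$ and $\norm{\eta_i}_\infty \leq \norm{d_{n,i}}_\infty^2 \leq M^2 C_n^2 \alpha_n^{-2}$ by Lemma~\ref{lm:asymtotic-negligibility}, one has $\Var(\eta_i) \leq \E[\eta_i^2] \leq \norm{\eta_i}_\infty \E[\eta_i] \leq M^2 C_n^2 \alpha_n^{-2}\, \E[d_{n,i}^2]$, and summing in $i$ and invoking \eqref{eq:variance-mds-bounds} yields $\sum_i \Var(\eta_i) \leq M^2 C_n^2 \alpha_n^{-2}\Var[S_n]$.

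For the cross terms, fix $i$ and $j>i$. Since $\eta_i$ is $\F_{n,i-1}$-measurable and $\eta_j = \E[d_{n,j}^2 \mid \F_{n,j-1}]$ with $\F_{n,i}\subseteq\F_{n,j-1}$, the tower property gives $\E[\eta_i\eta_j] = \E[\eta_i \,\E[d_{n,j}^2 \mid \F_{n,i}]]$, so summing in $j$ telescopes the tail of cross-covariances into
$$\sum_{j=i+1}^{n+m}\Cov(\eta_i,\eta_j) \;=\; \Cov(\eta_i,\phi_i).$$
This is exactly the shape in which Lemma~\ref{lm:oscillation-mds} applies, delivering $\norm{\phi_i - \E[\phi_i]}_\infty \leq \Osc(\phi_i) \leq M C_n^2 \alpha_n^{-2}$.

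The concluding estimate avoids Cauchy--Schwarz by using non-negativity: because $\eta_i\geq 0$, the triangle inequality gives $\E[|\eta_i - \E\eta_i|] \leq 2\E[\eta_i] = 2\E[d_{n,i}^2]$, and hence
$$|\Cov(\eta_i,\phi_i)| \;\leq\; \norm{\phi_i - \E[\phi_i]}_\infty \cdot \E[|\eta_i - \E\eta_i|] \;\leq\; 2 M C_n^2 \alpha_n^{-2}\,\E[d_{n,i}^2].$$
Summing in $i$ and appealing to \eqref{eq:variance-mds-bounds} once more gives $\big|\sum_{i<j}\Cov(\eta_i,\eta_j)\big| \leq 2 M C_n^2 \alpha_n^{-2} \Var[S_n]$, which together with the diagonal estimate proves the lemma. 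The main obstacle is precisely this last step: without the positivity of $\eta_i$ one would be forced into $\E[|\eta_i - \E\eta_i|] \leq \sqrt{\Var(\eta_i)}$, and a subsequent Cauchy--Schwarz across $i$ would introduce a factor of $\sqrt{n}$ that is fatal to the bound.
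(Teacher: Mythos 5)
Your proposal is correct and follows essentially the same route as the paper: the same diagonal/off-diagonal decomposition, the same tower-property collapse of the cross-covariances onto the tail sum $\sum_{j>i}\E[d_{n,j}^2\mid\F_{n,i}]$ controlled by Lemma~\ref{lm:oscillation-mds}, and the same use of the non-negativity of $\eta_i$ to bound $\E\abs{\eta_i-\E\eta_i}$ by $2\E[d_{n,i}^2]$ rather than resorting to Cauchy--Schwarz. No gaps.
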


\begin{proof} By direct expansion we have
\begin{equation}\label{eq:DeltaExpanded-new}
\E[ \Delta_n^2 ]= \sum_{i = 1+m}^{n+m} \Var[\eta_i]
+ 2 \sum_{i = 1+m}^{n+m}  \E\big[ (\eta_i - \E[\eta_i]) \big\{   \sum_{j = i+1}^{n+m} \big(\eta_j -  \E[\eta_j]\big) \big\}\big],
\end{equation}
and we estimate the two sums separately.
First, by crude bounds and \eqref{eq:Linfty-bound-MDS} we have
$$
\E[  \eta_i ^2 ] \leq  \norm{ \eta_i }_\infty  \, \E[\eta_i] \leq  \norm{d_{n,i}}_\infty^2 \, \E[\eta_i ]
\leq  M C_n^2 \alpha_n^{-2} \E[\eta_i ],
$$
so we obtain that the first sum of  \eqref{eq:DeltaExpanded-new} satisfies the inequality
$$
 \sum_{i = 1+m}^{n+m} \Var[\eta_i] \leq M C_n^2 \alpha_n^{-2} \sum_{i = 1+m}^{n+m}  \E[\eta_i ].
$$
The twin bounds of \eqref{eq:variance-mds-bounds} and the definition $\eta_i = \E[  d_{n,i}^2 \,|\, \F_{n,i-1}]$
then tell us that
\begin{equation}\label{eq:variance-mds-bound-second-lemma}
\Var[S_n] - M C_n^2 \alpha_n^{-2} \leq \sum_{i=1+m}^{n+m} \E[\eta_i] \leq \Var[S_n],
\end{equation}
so we also have the upper bound
\begin{equation}\label{eq:VarBound-new}
 \sum_{i = 1+m}^{n+m} \Var[\eta_i] \leq M C_n^2 \alpha_n^{-2} \Var[S_n].
\end{equation}

To estimate the second sum of \eqref{eq:DeltaExpanded-new},
we first note that $\eta_i$ is $\F_{n,i-1}$-measurable and $\F_{n,i-1} \subseteq \F_{n,i}$,
so, if we condition on $\F_{n,i}$ we have
\begin{equation}\label{eq:decomposition-tower-property-new}
\E\bigg[( \eta_i - \E[\eta_i] )\big\{ \sum_{j = i+1}^{n+m} (\eta_j -  \E[\eta_j]) \big\} \bigg]
  =  \E\bigg[ ( \eta_i - \E[\eta_i] ) \, \E[ \sum_{j = i+1}^{n+m} (\eta_j -  \E[\eta_j]) \,|\, \F_{n,i} ] \bigg]  .
\end{equation}
The definition of $\eta_j$ tells us that $\eta_j-\E[ \eta_j ] = \E [d_{n,j}^2 \, | \, \F_{n, j-1}] - \E [d_{n,j}^2]$ so,
because $\F_{n, i} \subseteq \F_{n, j-1}$ for all $i<j$, one then has
\begin{align*}
\E[ \sum_{j = i+1}^{n+m} (\eta_j -  \E[\eta_j]) \,|\,  \F_{n,i} ]
 & = \sum_{j = i+1}^{n+m} \{ \E [d_{n,j}^2 \, | \, \F_{n, i}] -  \E [d_{n,j}^2]\}.
\end{align*}
These summands have mean zero, so the bound \eqref{eq:OscTrivial2}
and the oscillation inequality \eqref{eq:oscillation-bound-sum-dtsquared} give us
$$
\norm{ \E[ \sum_{j = i+1}^{n+m} (\eta_j -  \E[\eta_j]) |  \F_{n,i} ] }_\infty  \leq  M C_n^2 \alpha_n^{-2}.
$$
When we use this estimate in \eqref{eq:decomposition-tower-property-new}, we see from the non-negativity of $\eta_j$ and the triangle inequality
that
$$
\biggr|\E \bigg[ ( \eta_i - \E[\eta_i] ) \{ \sum_{j = i+1}^{n+m} (\eta_j -  \E[\eta_j]) \} \bigg] \biggr|
\leq M C_n^2 \alpha_n^{-2} \E[\eta_i],
$$
so, after summing over $i \in \{1+m, \ldots, n+m\}$ and recalling the second inequality of \eqref{eq:variance-mds-bound-second-lemma}
we obtain
\begin{equation}\label{eq:bound-second-term-new}
\biggr| \sum_{i = 1+m}^{n+m}  \E\bigg[ ( \eta_i - \E[\eta_i] ) \{   \sum_{j = i+1}^{n+m}  (\eta_j -  \E[\eta_j]) \} \bigg] \biggr|
\leq M C_n^2 \alpha_n^{-2} \Var[S_n].
\end{equation}
By \eqref{eq:DeltaExpanded-new}, the bounds \eqref{eq:VarBound-new} and  \eqref{eq:bound-second-term-new}
complete the proof of the lemma.
\end{proof}

Now, at last, we can use the basic decomposition \eqref{eq:Sn-decomposition} to write
\begin{equation}\label{eq:bottomline}
  \frac{S_n}{\sqrt{\Var[S_n]}} =  \sum_{i=1}^n  \frac{d_{n,i+m}}{\sqrt{\Var[S_n]}} + O\big( \frac{ \norm{V_{n,m}}_\infty}{\sqrt{\Var[S_n]}} \big),
\end{equation}
and it only remains to apply our lemmas.
First, from our hypothesis \eqref{eq:asymptotic-condition-for-clt-1} that $ C_n^2 \alpha_n^{-2} = o(\Var[S_n])$ as $n \rightarrow \infty$,
we see that the $L^\infty$-bound $\norm{d_{n,i}}_\infty \leq M C_n \alpha_n^{-1}$ in Lemma \ref{lm:asymtotic-negligibility}
implies the asymptotic negligibility \eqref{eq:asymtotic-negligibility}
of the scaled differences $d_{n,i+m}/\sqrt{\Var[S_n]}$, $ 1 \leq i \leq n$.
Second, our hypothesis  \eqref{eq:asymptotic-condition-for-clt-1} and the variance bounds \eqref{eq:variance-mds-bounds}
imply the asymptotic equivalence
$$
\Var[S_n] \sim  \sum_{i=1}^n  \E[ d^2_{n,i+m}] \quad \quad \text{ as } n \rightarrow \infty,
$$
so the $L^2$-inequality in Lemma \ref{lm:LLN-conditional-variances-new}
tells us that the weak law \eqref{eq:LLN-conditional-variances} also holds
for the scaled martingale differences.

Taken together, these two observations
imply that the first sum on the right-hand side of \eqref{eq:bottomline} converges in distribution to a standard normal.
Moreover, because of the $L^\infty$-bound
$ \norm{V_{n,m}}_\infty \leq M C_n \alpha_n^{-1}$ given by  \eqref{eq:Linfty-bound-value-functions}, the last term
in \eqref{eq:bottomline} is asymptotically negligible.
In turn, these observations tell us that
$$
\frac{S_n}{\sqrt{\Var[S_n]}} \Longrightarrow N(0,1) \quad \quad \text{as } n \rightarrow \infty,
$$
and the proof of Theorem \ref{th:CLT-nonhomogeneous-chain} is complete.

\section{Dynamic Inventory Management: A Leading Example}\label{se:inventory}

We now consider a classic dynamic inventory management problem where one has $n$ periods and $n$ independent demands
$D_1, D_2, \ldots,D_n$.
We assume that demands all have the same density $\psi$, and that this density has support on a bounded interval contained in $[0,\infty)$.

In each period $1 \leq i \leq n$ one knows the current level of inventory $x$, and the task is to decide
the level of inventory $y \geq x$ that one wants to hold after an order is placed and fulfilled.
Here it is also useful to allow for $x$ to be negative, and, in that case, $|x|$ would represent the level of backlogged demand.
To stay mindful of this possibility, we sometimes call $x$ the generalized inventory level.

We further assume that orders are fulfilled instantaneously
at a cost that is proportional to the ordered quantity; so,
for example, to move the inventory level
from $x$ to $y\geq x$, one places an order of size $y-x$  and incurs a purchase cost equal to $c(y-x)$
where the multiplicative constant $c$ is a parameter of the model.

The model also
takes into account the cost of either holding physical inventory or of managing a backlog.
Specifically, if the current generalized inventory is equal to $x$, then the firm
incurs additional carrying costs that are given by
$$
L(x) =
\begin{cases}
    \hphantom{-}c_h x       & \quad \text{if } x \geq 0 \\
    -c_p x                  & \quad \text{if } x  < 0.
\end{cases}
$$
In other words, if $x\geq 0$, then $L(x)$ represents the cost for holding a quantity $x$ of inventory from one period to the next,
and, if $x< 0$, then $L(x)$ represents the penalty cost for managing a quantity $-x \geq 0$ of unmet demand.

Here we also assume that all unmet demand can be successfully backlogged, so
customers in one period whose demand is incompletely met will return
in successive periods until either their demand has been met or until the decision period $n$ is completed.
If there is still unmet demand at time $n$, then that demand is lost.
Finally, we assume that the purchase cost rate $c$ is strictly smaller than the penalty rate $c_p$,
so it is never optimal to accrue penalty costs when one can place an order. Naturally, the manager's
objective is to minimize the total expected
inventory costs over the decision periods $1, 2, \ldots, n$.

This problem has been widely studied, and, at this point,
its formulation as a dynamic program is well understood --- cf.~\citet{BellmanGlicksbergGross:MS1955},
\citet{Bulinskaya:TPA1964}, or \citet[Section 4.2]{Porteus:SUP2002}.
Specifically, if we let $v_k(x)$ denote the minimal expected inventory cost when there are $k$ time periods remaining and
when $x$ is the current generalized inventory level, then dynamic programming gives us the backwards recursion
\begin{equation}\label{eq:value-functions-inventory}
v_k(x) = \min_{y \geq x} \big\{ c(y-x) + \E[L(y - D_{n-k+1})] + \E[v_{k-1}(y-D_{n-k+1})]\big\},
\end{equation}
for $1 \leq k \leq n$, and
one computes $v_k(x)$ by iteration beginning with $v_0(x) = 0$.

For this model, it is also well-known that
there is a \emph{base-stock policy} that is optimal;
specifically, there are non-decreasing values
\begin{equation}\label{eq:base-stock-level-monotone}
s_1 \leq s_2 \leq \cdots\leq s_n
\end{equation}
such that if the current time is $i$ and the current inventory is $x$, then
the optimal level $\gamma_{n,i}(x)$ at time $i$ for the inventory after restocking is given by
\begin{equation}\label{eq:base-stock-policy}
\gamma_{n,i}(x) =
\begin{cases}
    s_{n-i+1}       & \quad \text{if } x \leq s_{n-i+1} \\
    x                  & \quad \text{if } x > s_{n-i+1}.
\end{cases}
\end{equation}
In other words, if at  time $ i$ the inventory level is below $s_{n-i+1}$ then the optimal action is
to place an order of size $s_{n-i+1} - x$, but if the inventory level is
$s_{n-i+1}$ or higher, then the optimal action is to order nothing. Moreover,
\citet[Theorem 1]{Bulinskaya:TPA1964} also showed that for demands with density $\psi$
and cumulative distribution function $\Psi$,
one has for $n \geq 2$ that
\begin{equation}\label{eq:base-stock-level-bounds}
s_1 = \Psi^{-1} \bigg( \frac{c_p-c}{c_h+c_p}\bigg)  \quad \text{ and }  \quad
s_n \leq s_\infty = \Psi^{-1} \bigg( \frac{ c_p}{c_h+c_p}\bigg).
\end{equation}
These relations will be important for us later.

\subsection*{A CLT for Optimally Managed Inventory Costs}

To begin, we take the generalized inventory at the beginning of period $i = 1$ (before any order is placed)
to be $X_{n,1}=x$, where $x$ can be any element of the interval $[-s_\infty, s_\infty]$.
Subsequently we take $X_{n,i}$ to be
the generalized inventory at the beginning of period $i \in \{2,3,\ldots, n\}$;
so, in view of the base-stock policy \eqref{eq:base-stock-policy}, we have the stochastic recursion
\begin{equation}\label{eq:Xi-inventory}
X_{n,i+1} = \gamma_{n,i}(X_{n,i}) - D_i \quad \quad \text{for all } 1 \leq i \leq n.
\end{equation}
The key point here is that  $\{X_{n,i}: 1 \leq i \leq n+1\}$  is a temporally non-homogenous Markov chain.
Moreover, if the support of the demand density $\psi$ is contained in $[0,J]$ with $0 < J < \infty$ and if $s_1$ and $s_\infty$
are given by \eqref{eq:base-stock-level-bounds},
then by the recursion \eqref{eq:Xi-inventory} we can choose the state space $\X$ of this chain so that
\begin{equation}\label{eq:StateSpaceInclusion}
\X \subseteq [- J, s_\infty].
\end{equation}

Now, if $\pi^*_n$ is the policy that minimizes the total expected inventory cost
that is incurred over $n$ decision periods,
then the total cost that is realized when one follows the policy $\pi^*_n$
is given by
\begin{equation}\label{eq:TotalCost}
\C_n(\pi^*_n) = \sum_{i=1}^n \big\{ c( \gamma_{n,i}(X_{n,i}) - X_{n,i} ) + L (X_{n,i+1})\big\},
\end{equation}
and we see that the total inventory cost $\C_n(\pi^*_n)$ is a special case of the sum \eqref{eq:Sn}.
To spell out the correspondence, we first take $m=1$, and then we take
\begin{equation*}
f_{n,i}(x,y) = c( \gamma_{n,i}(x) - x ) + L (y), \quad \quad \text{for } 1 \leq i \leq n,
\end{equation*}
so finally  \eqref{eq:Xi-inventory} gives us the driving Markov chain.

Theorem \ref{th:CLT-nonhomogeneous-chain} and Corollary \ref{cor:CLT}
now give us a natural path to a central limit theorem for the
realized optimal inventory cost. We only need to isolate a
mild regularity condition on the density function $\psi$ of the demand distribution $\Psi$.

\begin{definition} [Typical Class]\label{m-def}
We say that a probability density function $\psi$ is in the \emph{typical class}
if for each $\epsilon \geq 0$ there is a $\widehat{w}=\widehat{w}(\epsilon)$ such that
\begin{align*}
\psi(w)-\psi(w+\epsilon) &\leq 0 \quad \text{for all } w \leq \widehat{w}, \quad \text{and} \\
\psi(w)-\psi(w+\epsilon) &\geq 0 \quad \text{for all } w \geq \widehat{w}.
\end{align*}
\end{definition}

Densities in the typical class include the uniform density on $[0,J]$, the $\texttt{beta}(\alpha, \beta)$
density with $\alpha\geq 1$ and $\beta\geq 1$, the exponential densities, and the gamma densities. For an
example of a density that is not in the typical class, one can take any density with two separated modes. Such
multi-modal densities are seldom used in demand models.

\begin{theorem}[CLT for Mean-Optimal Inventory Cost]\label{thm:inventory-CLT}
If the demand density $\psi$ is in the typical class and if $\psi$ has bounded support,
then the inventory cost
$\C_n(\pi^*_n)$ realized under the mean-optimal policy $\pi^*_n$
obeys the asymptotic normal law
$$
\frac{\C_n(\pi^*_n) - \E[\C_n(\pi^*_n)]}{\sqrt{\Var[\C_n(\pi^*_n)]}} \Longrightarrow N(0,1), \quad \quad \text{as } n \rightarrow \infty.
$$
\end{theorem}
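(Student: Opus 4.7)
The plan is to apply Corollary \ref{cor:CLT} to the sum $\C_n(\pi^*_n) = \sum_{i=1}^n f_{n,i}(X_{n,i}, X_{n,i+1})$ identified just before the theorem, with $m=1$, $f_{n,i}(x,y) = c(\gamma_{n,i}(x) - x) + L(y)$, and driving chain \eqref{eq:Xi-inventory}. Uniform boundedness $\|f_{n,i}\|_\infty \leq C$ is immediate from the state-space inclusion \eqref{eq:StateSpaceInclusion}, the base-stock bounds \eqref{eq:base-stock-level-bounds} (which confine $\gamma_{n,i}(x) - x$ to the bounded interval $[0, s_\infty + J]$), and the linearity of $L$ on the bounded state space. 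So the constant $C_n$ in the corollary can be taken independent of $n$.

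Next I would establish a uniform lower bound $\alpha_n \geq c > 0$. The one-step kernel is $K^{(n)}_{i,i+1}(x, dy) = \psi(\gamma_{n,i}(x) - y)\, dy$, and the base-stock rule \eqref{eq:base-stock-policy} forces $\gamma_{n,i}(x) \in [s_1, s_\infty]$ for every admissible $x$. Since $c > 0$ makes the two quantiles in \eqref{eq:base-stock-level-bounds} strictly distinct, one has $s_\infty - s_1 < J$, so the interval $I = (s_\infty - J, s_1)$ is nonempty, and $(\mu, y) \mapsto \mu - y$ sends $[s_1, s_\infty] \times I$ into the interior $(0, J)$ of the support of $\psi$. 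The typical-class condition, together with the continuity and interior positivity shared by the concrete densities listed after Definition \ref{m-def}, yields a uniform bound $\psi(\mu - y) \geq \epsilon_0 > 0$ on that product set; hence the Doeblin-type minorization
\[
K^{(n)}_{i,i+1}(x, B) \;\geq\; \epsilon_0\, \lambda(B \cap I), \qquad x \in \X,
\]
which gives $\delta(K^{(n)}_{i,i+1}) \leq 1 - \epsilon_0 |I|$ and $\alpha_n \geq c := \epsilon_0|I|$ uniformly in $n$.

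The main obstacle is the variance lower bound $\Var[\C_n(\pi^*_n)] \to \infty$. Here my plan is to telescope the stochastic recursion \eqref{eq:Xi-inventory} to obtain the accounting identity $\sum_{i=1}^n \{\gamma_{n,i}(X_{n,i}) - X_{n,i}\} = X_{n,n+1} - X_{n,1} + \sum_{i=1}^n D_i$, which rewrites the total cost as
\[
\C_n(\pi^*_n) \;=\; c\sum_{i=1}^n D_i \;+\; \sum_{i=1}^n L(X_{n,i+1}) \;+\; c(X_{n,n+1} - X_{n,1}).
\]
The first summand has variance $c^2 n \Var[D_1] = \Omega(n)$ and the third is bounded, so the delicate task reduces to ruling out a near-perfect negative correlation between $c \sum D_i$ and $\sum L(X_{n,i+1})$. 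The minorization established above forces the chain to mix geometrically, so a block decomposition of the MDS $\{d_{n,i+1}\}$ built in Section \ref{se:martingale} produces $\Omega(n)$ approximately independent contributions; the typical-class regularity ensures that the post-reset variable $s_{n-i+1} - D_i$ inherits the nondegeneracy of $D_i$, so each block contributes an $\Omega(1)$ term to $\sum_i \E[d_{n,i+1}^2]$. Combined with the lower bound in \eqref{eq:variance-mds-bounds}, this yields $\Var[\C_n(\pi^*_n)] = \Omega(n)$, and Corollary \ref{cor:CLT} then delivers the claimed asymptotic normal law.
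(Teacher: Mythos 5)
Your overall architecture matches the paper's: bound the one-period costs, establish a uniform lower bound $\alpha_n \geq c > 0$, show $\Var[\C_n(\pi^*_n)] \to \infty$, and invoke Corollary \ref{cor:CLT}. However, both of the two substantive steps have genuine gaps.

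For the ergodic coefficient, your Doeblin minorization requires $\psi(\mu - y) \geq \epsilon_0 > 0$ uniformly for $\mu \in [s_1, s_\infty]$ and $y \in I$, and you justify this by appealing to ``the continuity and interior positivity shared by the concrete densities listed after Definition \ref{m-def}.'' That is an additional hypothesis, not a consequence of the theorem's assumptions: membership in the typical class does not imply that $\psi$ is bounded below (or even positive) on compact subsets of $(0,J)$. For instance, the triangular density $\psi(w) = 2w/J^2$ on $[0,J]$ is in the typical class but vanishes at $0$, and your product set $[s_1,s_\infty] \times I$ produces arguments $\mu - y$ arbitrarily close to $0$, so your $\epsilon_0$ is zero there. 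The paper avoids any positivity requirement: Lemma \ref{lm:m-TC} uses the typical-class structure to show the supremum defining the total variation distance is attained on a half-line, so the distance equals $\P(\widehat{w} \leq D_1 \leq \widehat{w}+\epsilon)$ with $\epsilon \leq s_\infty - s_1$, and Lemma \ref{lm:probability-bound} bounds this by $\max\{c_p/(c_h+c_p),\, (c_h+c)/(c_h+c_p)\} < 1$ using only the quantile identities \eqref{eq:base-stock-level-bounds}. Your minorization would succeed under an added interior-positivity assumption, but it does not prove the theorem as stated.

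For the variance, your telescoping identity is correct and is a nice reduction, but the conclusion is asserted rather than proved. Geometric mixing plus ``a block decomposition producing $\Omega(n)$ approximately independent contributions'' does not yield $\sum_i \E[d_{n,i+1}^2] = \Omega(n)$: mixing controls the variance from above, not below, and each $d_{n,i+1}$ is a function of the value-to-go $\widehat{v}_{n-i}$ evaluated at $\gamma_{n,i}(X_{n,i}) - D_i$, which could a priori be nearly constant over the relevant range, making the martingale differences degenerate even though $D_i$ itself is not. The missing ingredient is the structural fact the paper exploits: for arguments in $[0,s_1]$ the next period orders up to $s_{n-i}$, so $\widehat{v}_{n-i}$ is exactly affine there with slope $-(c+c_h)$; writing $\E[d_{n,i+1}^2 \,|\, \F_{n,i}]$ via an independent copy $D_i'$ and restricting to the event that both demands fall in $[\gamma_{n,i}(X_{n,i}) - s_1,\, \gamma_{n,i}(X_{n,i})]$ then gives an explicit per-period lower bound $\beta > 0$. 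Without identifying a region where the value function is quantitatively non-constant, your argument does not close.
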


The one-period cost functions in the sum \eqref{eq:TotalCost} are uniformly bounded because
of the inclusion \eqref{eq:StateSpaceInclusion} and $0<J<\infty$, so
two steps are needed to extract this result from Theorem \ref{th:CLT-nonhomogeneous-chain}.
First we show that the minimal ergodic coefficient of the Markov chain
\eqref{eq:Xi-inventory} is bounded away from zero. Second, we show that
the variance of $\C_n(\pi^*_n)$ goes to infinity as $n\rightarrow \infty$.

After we complete the proof of Theorem \ref{thm:inventory-CLT}, we have two observations.
The first explains why one cannot prove
Theorem \ref{thm:inventory-CLT} by
the device of state space extension and direct invocation of \citeauthor{Dobrushin:TPA1956}'s theorem.
In a nutshell, the issue that if one extends the state space then the coefficient of ergodicity
can become degenerate.
The second observation highlights how one still has the conclusion of
Theorem \ref{thm:inventory-CLT} even for models where there is no immediate fulfillment of placed orders.

\subsection*{A Uniform Lower Bound for the Minimal Ergodic Coefficients}

To establish a uniform lower bound for the minimal ergodic coefficients of
the Markov chain \eqref{eq:Xi-inventory},
we begin with a general lemma which explains the role of the
class of typical densities.

\begin{lemma}[Total Variation Distance Bound]\label{lm:m-TC}
If the density $\psi$  of $D_1$ is in the \emph{typical class}, then for $\epsilon =|\gamma_{n,i}(x') - \gamma_{n,i}(x)|$
one has
\begin{equation}\label{eq:TV-m}
\sup_{B \in \B(\X)} \abs{ \Kn_{i,i+1}(x',B) -\Kn_{i,i+1}(x,B) } = \P(\widehat{w} \leq D_1 \leq \widehat{w}+ \epsilon),
\end{equation}
where $\widehat{w}=\widehat{w}(\epsilon)$ is the value guaranteed by Definition \ref{m-def}.
\end{lemma}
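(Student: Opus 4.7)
My approach is to compute the indicated supremum directly from the explicit form of the one-step transition and then use Definition \ref{m-def} to collapse an $L^1$-norm of a shifted density into a single tail integral. By the recursion \eqref{eq:Xi-inventory}, conditional on $X_{n,i}=x$ the next state $X_{n,i+1}$ is distributed as $\gamma_{n,i}(x)-D_{1}$. Writing $a=\gamma_{n,i}(x)$ and $a'=\gamma_{n,i}(x')$, and taking without loss of generality $a'\ge a$ so that $\epsilon=a'-a$, the kernels $\Kn_{i,i+1}(x,\cdot)$ and $\Kn_{i,i+1}(x',\cdot)$ admit Lebesgue densities $u\mapsto \psi(a-u)$ and $u\mapsto\psi(a'-u)$ respectively. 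The standard $L^{1}$-representation of the supremum-distance between two probability measures with densities then gives
\begin{equation*}
\sup_{B\in\B(\X)} \abs{\Kn_{i,i+1}(x',B)-\Kn_{i,i+1}(x,B)}
= \tfrac12\int\abs{\psi(a-u)-\psi(a'-u)}\,du,
\end{equation*}
and the change of variable $v=a-u$ turns the right-hand side into $\tfrac12\int\abs{\psi(v)-\psi(v+\epsilon)}\,dv$.

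At this point I would invoke the typical-class property. Setting $g(v)=\psi(v)-\psi(v+\epsilon)$, Definition \ref{m-def} says precisely that $g(v)\le 0$ on $(-\infty,\widehat w]$ and $g(v)\ge 0$ on $[\widehat w,\infty)$, so $g$ changes sign exactly once. Because both $\psi(\cdot)$ and $\psi(\cdot+\epsilon)$ are probability densities, $\int_{\R} g(v)\,dv=0$, which forces the two signed tails to cancel and hence to have equal absolute mass. Consequently
\begin{equation*}
\tfrac12\int\abs{g(v)}\,dv
= \int_{\widehat w}^{\infty}\bigl(\psi(v)-\psi(v+\epsilon)\bigr)\,dv
= \P(D_{1}\ge\widehat w)-\P(D_{1}\ge\widehat w+\epsilon),
\end{equation*}
and the right-hand side equals $\P(\widehat w\le D_{1}\le\widehat w+\epsilon)$, proving \eqref{eq:TV-m}.

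The main obstacle is essentially bookkeeping: choosing the orientation so that $\epsilon\ge 0$, tracking the reflection in the change of variable $v=a-u$, and matching the factor $\tfrac12$ from the $L^1$-representation against the factor $2$ that falls out of the equal-mass argument. There is no genuine analytical difficulty, because Definition \ref{m-def} is tailored precisely so that $\psi(\cdot)-\psi(\cdot+\epsilon)$ has a single sign change, which is exactly what one needs for the absolute-value integral to reduce to a single tail difference.
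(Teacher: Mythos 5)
Your proof is correct and follows essentially the same route as the paper's: both reduce the supremum to the total variation distance between the law of $D_1$ and its shift by $\epsilon$, and both use the single sign change guaranteed by the typical class to identify the optimal Borel set with the tail $[\widehat w,\infty)$. The only cosmetic difference is that you pass through the $\tfrac12\,L^1$ identity and the equal-mass cancellation, whereas the paper directly observes that the supremum is attained at $B^*=\{w:\psi(w)\ge\psi(w+\epsilon)\}$ and equates the integrals over $B^*$ with those over $[\widehat w,\infty)$.
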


\begin{proof}
Given $x \in \X$ and a Borel set $B\subseteq \X$, we introduce the Borel set
$$
B_x = \gamma_{n,i}(x) -B,
$$
so the transition kernel of the Markov chain \eqref{eq:Xi-inventory} can be written as
$$
\Kn_{i,i+1}(x, B)  = \P (X_{n,i+1} \in B \,|\, X_{n,i}=x ) = \P( D_{1} \in B_x ) = \int_{B_x} \psi(w) \, dw.
$$
Without loss of generality we can assume that $x \leq x'$, so the restocking formula
\eqref{eq:base-stock-policy} gives us
$\gamma_{n,i}(x) \leq \gamma_{n,i}(x')$, and for
$\epsilon = \gamma_{n,i}(x') - \gamma_{n,i}(x) \geq 0$
we find
$$
\Kn_{i,i+1}(x', B)  = \P (X_{n,i+1} \in B \,|\, X_{n,i}=x' ) = \P( D_{1} - \epsilon \in B_x ) = \int_{B_x} \psi(w+\epsilon) \, dw.
$$
The absolute difference in \eqref{eq:TV-m} is then given by
$$
\abs{ \Kn_{i,i+1}(x',B) -\Kn_{i,i+1}(x,B) } = \abs{\int_{B_x} \psi(w) \, dw - \int_{B_x} \psi(w+\epsilon) \, dw },
$$
and the supremum is attained at $B_x^*=\{ w: \psi(w) \geq \psi(w+\epsilon)\}$.
Because $\psi$ belongs to the typical class, Definition \ref{m-def} tells us that the integrals over  $B_x^*$ are equal to
the corresponding integrals over $[\widehat{w}, \infty)$. Hence, we have
\begin{align*}
\sup_{B \in \B(\X)} \abs{ \Kn_{i,i+1}(x',B) & -\Kn_{i,i+1}(x,B) }
= \int_{\widehat{w}}^\infty \big\{\psi(x) -\psi(x+\epsilon) \big\} \, dx \\
& = \P(D_1 \geq  \widehat{w})-\P(D_1-\epsilon \geq \widehat{w}) = \P(\widehat{w}\leq D_1 \leq \widehat{w}+\epsilon),
\end{align*}
just as needed.
\end{proof}

Lemma \ref{lm:m-TC} can be generalized to accommodate multi-modal densities, but since such densities are seldom used as models for demand
distributions, the simple formulation given here covers all the models one is likely to meet in practice.
Moreover, the definitions of $s_1$ and $s_\infty$ given by \eqref{eq:base-stock-level-bounds}
now give us just what we need to make good use of our basic bound \eqref{eq:TV-m}.

\begin{lemma} \label{lm:probability-bound}
For $x,x' \in \X$ and $\epsilon =|\gamma_{n,i}(x') - \gamma_{n,i}(x)|$ one has
\begin{equation*}
\sup_{w \in \R} \P( w \leq D_1 \leq w+\epsilon)
\leq \max \bigg\{\frac{c_p}{c_h+c_p}, \frac{c_h+c}{c_h+c_p} \bigg\}< 1.
\end{equation*}
\end{lemma}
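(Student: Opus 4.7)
The plan is to bound $\epsilon$ explicitly in terms of the base-stock levels, and then translate the supremum over intervals of length $\epsilon$ of the demand distribution into bounds involving $\Psi(s_1)$ and $\Psi(s_\infty)$, which by \eqref{eq:base-stock-level-bounds} are precisely the quantities that appear in the stated maximum.

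First, I would observe that the base-stock formula \eqref{eq:base-stock-policy} and the monotonicity \eqref{eq:base-stock-level-monotone} together with the state-space inclusion \eqref{eq:StateSpaceInclusion} force
$$
\gamma_{n,i}(x) \in [s_{n-i+1}, s_\infty] \subseteq [s_1, s_\infty] \quad \text{for all } x \in \X,
$$
since $\gamma_{n,i}(x)$ equals either $s_{n-i+1}$ or $x \leq s_\infty$, while always being at least $s_{n-i+1} \geq s_1$. Consequently
$$
\epsilon = |\gamma_{n,i}(x') - \gamma_{n,i}(x)| \leq s_\infty - s_1,
$$
and by \eqref{eq:base-stock-level-bounds} we have $\Psi(s_\infty) - \Psi(s_1) = c/(c_h+c_p)$.

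Next, I would rewrite $\P(w \leq D_1 \leq w+\epsilon) = \Psi(w+\epsilon) - \Psi(w)$ and split the supremum over $w \in \R$ into cases based on the position of the interval $[w, w+\epsilon]$ relative to $[s_1, s_\infty]$. The only cases that can produce a probability above the straightforward interior bound $\Psi(s_\infty) - \Psi(s_1) = c/(c_h+c_p)$ are the two \emph{straddling} configurations:
\begin{itemize}
\item If $w \leq s_1 \leq w+\epsilon$, then since $\epsilon \leq s_\infty - s_1$ we have $w+\epsilon \leq s_\infty$, so $\Psi(w+\epsilon) - \Psi(w) \leq \Psi(s_\infty) - 0 = c_p/(c_h+c_p)$.
\item If $w \leq s_\infty \leq w+\epsilon$, then $w \geq s_\infty - \epsilon \geq s_1$, so $\Psi(w+\epsilon) - \Psi(w) \leq 1 - \Psi(s_1) = (c_h+c)/(c_h+c_p)$.
\end{itemize}
All remaining configurations (interval entirely to the left of $s_1$, entirely in $[s_1,s_\infty]$, or entirely to the right of $s_\infty$) give bounds that are strictly smaller than one of the two already listed. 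Taking the maximum of the two straddling bounds yields the claimed upper bound.

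Finally, the strict inequality $\max\{c_p/(c_h+c_p), (c_h+c)/(c_h+c_p)\} < 1$ is immediate: the first term is strictly less than one because $c_h > 0$, and the second term is strictly less than one precisely because of the standing assumption $c < c_p$ introduced earlier. No step here looks like a real obstacle; the main point to execute carefully is matching the two nontrivial cases to the two arguments of the max so that the algebraic identity $1-\Psi(s_1) = (c_h+c)/(c_h+c_p)$ is visible at the right moment.
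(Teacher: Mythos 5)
Your proposal is correct and follows essentially the same route as the paper: both arguments rest on the key bound $\epsilon \leq s_\infty - s_1$ (from the monotone base-stock structure and the state-space inclusion) and then split on the position of $[w,w+\epsilon]$ relative to $s_1$ and $s_\infty$, using the identities $\Psi(s_\infty)=c_p/(c_h+c_p)$ and $1-\Psi(s_1)=(c_h+c)/(c_h+c_p)$. The paper's case split is slightly more economical (only $w+\epsilon \leq s_\infty$ versus $w+\epsilon \geq s_\infty$), but this is a cosmetic difference.
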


\begin{proof}
Without any loss of generality, we again take $x \leq x'$ and note that the inclusion \eqref{eq:StateSpaceInclusion}
tells us that $x' \leq s_\infty$.
Next, the monotonicity of the restocking formula \eqref{eq:base-stock-policy}
and the defining relations in \eqref{eq:base-stock-level-bounds}
give us that
$$
s_1 \leq \gamma_{n,i}(x) \leq \gamma_{n,i}(x') \leq s_\infty,
$$
so if $\epsilon = \gamma_{n,i}(x') - \gamma_{n,i}(x)$ then one has that
$$
0 \leq \epsilon = \gamma_{n,i}(x') - \gamma_{n,i}(x) \leq s_\infty - s_1.
$$
Now, if $w +\epsilon \leq s_\infty$, then we have the trivial bound
$$
\P(w \leq D_i \leq w+\epsilon) \leq \P(D_i \leq s_\infty),
$$
while if $w +\epsilon \geq s_\infty$ then $w \geq s_1$ and we similarly have
$$
\P(w \leq D_i \leq w+\epsilon) \leq \P(D_i \geq s_1).
$$
By the definitions of $s_1$ and $s_\infty$, we see from \eqref{eq:base-stock-level-bounds}
that
$$
\P(D_i \leq s_\infty) = \frac{c_p}{c_h+c_p}
\quad \quad \text{and} \quad \quad
\P(D_i \geq s_1) = \frac{c_h+c}{c_h+c_p},
$$
where both probabilities are strictly smaller than one because $c < c_p$ and $c_h>0$.
\end{proof}

Our Lemmas \ref{lm:m-TC} and \ref{lm:probability-bound}
tell us that for all  $1 \leq i \leq n$ we have a uniform bound on the contraction coefficient,
$$
\delta ( \Kn_{i,i+1} ) = \sup_{x,x'\in \X} \norm{\Kn_{i,i+1}( x, \cdot ) - \Kn_{i,i+1}( x', \cdot ) }_{\rm TV}
\leq \max \big\{\frac{c_p}{c_h+c_p}, \frac{c_h+c}{c_h+c_p} \big\}.
$$
This tells us that for the minimal ergodic coefficient we have
\begin{equation*}
\alpha_n = \min_{1 \leq i < n} \{ 1 - \delta ( \Kn_{i,i+1} ) \} \geq  \min \big\{\frac{c_h}{c_h+c_p}, \frac{c_p-c}{c_h+c_p} \big\} >0,
\end{equation*}
and this bound completes the
\emph{first step} in the proof of Theorem \ref{thm:inventory-CLT}.

\subsection*{Variance Lower Bound}

Here, as in most stochastic dynamic programs,
the value to-go process \eqref{eq:value-togo}
can be expressed in terms of the value functions that solve the dynamic programming
recursion \eqref{eq:value-functions-inventory}.
In particular, at time $1 \leq i \leq n$, when the current generalized inventory
is $X_{n,i}$ and there are $n-i+1$ demands yet to be realized, one has
$$
V_{n,i} = v_{n-i+1}(X_{n,i}),
$$
where the function $x \mapsto v_{n-i+1}(x)$ is calculated by \eqref{eq:value-functions-inventory}.
Moreover, since we start with $X_{n,1} = x\in \X$, the definition of  $v_n(x)$ gives us
$$
V_{n,1} = v_n(x) = \E[\C_n(\pi^*_n)],
$$
and the martingale decomposition \eqref{eq:Sn-decomposition} can be written more simply as
$$
\C_n(\pi^*_n) - \E[\C_n(\pi^*_n)] = \sum_{i=1}^n d_{n,i+1}.
$$
To bound $\Var[\C_n(\pi^*_n)]$ from below, one then just need to find
an appropriate lower bound on $\E[d_{n,i+1}^2]$ for $1\leq i \leq n$.

For our inventory problem we begin by writing the
martingale differences \eqref{eq:MDS} more explicitly as
\begin{equation}\label{eq:MDS-inventory}
d_{n,i+1} =  c( \gamma_{n,i}(X_{n,i}) - X_{n,i} ) + L (X_{n,i+1}) + v_{n-i}(X_{n,i+1}) - v_{n-i+1}(X_{n,i}).
\end{equation}

Next, we introduce the shorthand $\widehat{v}_{n-i} (x) = L (x) + v_{n-i}(x)$,
and we obtain from the recursion \eqref{eq:value-functions-inventory} and the policy characterization \eqref{eq:base-stock-policy}
that
\begin{align}\label{eq:value-functions-inventory2}
v_{n-i+1}(x)    & =  c( \gamma_{n,i}(x) -x ) + \E[L( \gamma_{n,i}(x) - D_{i})] + \E[v_{n-i}( \gamma_{n,i}(x) -D_{i})]\} \\
                & =  c( \gamma_{n,i}(x) -x ) + \E[\widehat{v}_{n-i} ( \gamma_{n,i}(x) - D_{i}) ].\notag
\end{align}

We now replace $x$ with $X_{n,i}$ in \eqref{eq:value-functions-inventory2}
to get a new expression for $v_{n-i+1}(X_{n,i}),$
and we replace the last summand of \eqref{eq:MDS-inventory} with this expression.
If we recall from \eqref{eq:Xi-inventory} that
$X_{n,i+1} = \gamma_{n,i}(X_{n,i}) - D_i$, then we find after simplification that
$$
d_{n,i+1} =  \widehat{v}_{n-i} (\gamma_{n,i}(X_{n,i}) -D_{i}) - \E[\widehat{v}_{n-i} ( \gamma_{n,i}(X_{n,i}) -D_{i} ) \,|\, \F_{n,i}],
$$
where, just as before, one has $\F_{n,i} = \sigma\{X_{n,1}, X_{n,2}, \ldots, X_{n,i}\}$.
This representation gives us a key starting point for estimating the second moment of $d_{n,i+1}$.

\begin{lemma}
For the inventory cost
$\C_n(\pi^*_n)$ realized under the mean-optimal policy $\pi^*_n$,
there is $\beta>0$ such that, for all $n\geq 1$, one has the variance lower bound
\begin{equation*}
\Var[\C_n(\pi^*_n)] = \sum_{i=1}^n \E[d_{n,i+1}^2] \geq \beta n.
\end{equation*}
\end{lemma}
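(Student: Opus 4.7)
The plan is to prove the uniform lower bound $\E[d_{n,i+1}^2] \geq \beta > 0$ for every $1 \leq i \leq n$. Combined with the orthogonality of the martingale differences in the decomposition $\C_n(\pi^*_n) - \E[\C_n(\pi^*_n)] = \sum_{i=1}^{n} d_{n,i+1}$ established just above the statement, this immediately yields $\Var[\C_n(\pi^*_n)] = \sum_{i=1}^n \E[d_{n,i+1}^2] \geq \beta n$. The main structural ingredient is that $\widehat{v}_{n-i}(x) = L(x) + v_{n-i}(x)$ is piecewise affine on a fixed neighborhood of the origin, uniformly in $i$.

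To see this, I will first use a textbook consequence of the base-stock characterization: substituting $y = s_k$ into the recursion \eqref{eq:value-functions-inventory} for $x \leq s_k$ gives $v_k(x) = c(s_k - x) + \E[\widehat{v}_{k-1}(s_k - D)]$, so $v_k$ is affine with slope $-c$ on $(-\infty, s_k]$. Since \eqref{eq:base-stock-level-monotone} and \eqref{eq:base-stock-level-bounds} give $s_k \geq s_1 > 0$ for every $k \geq 1$, the function $v_{n-i}$ is affine with slope $-c$ on the whole interval $(-s_1, s_1)$ uniformly in $i < n$. Combined with the kinked form of $L$, this gives that $\widehat{v}_{n-i}$ is affine with slope $-(c_p+c)$ on $(-s_1, 0)$ and affine with slope $c_h - c$ on $(0, s_1)$. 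The terminal case $i = n$ has $v_0 \equiv 0$, which changes the left slope only to $-c_p$; so in every case the absolute value of the left slope is at least $c_p > 0$.

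Next, I use the identity $\E[d_{n,i+1}^2] = \E\bigl[\Var[\widehat{v}_{n-i}(y - D_i) \mid \F_{n,i}]\bigr]$ with $y := \gamma_{n,i}(X_{n,i})$, and I localize $W := y - D_i$ to a fixed sub-interval $I_- := [-a_2, -a_1]$ with $0 < a_1 < a_2 < \min(s_1,\, J - s_\infty)$. By \eqref{eq:StateSpaceInclusion} and \eqref{eq:base-stock-policy} one has $y \in [s_1, s_\infty]$, so the event $\{W \in I_-\}$ equals $\{D_i \in [y+a_1,\, y+a_2]\}$ whose demand range is contained in the compact sub-interval $[s_1 + a_1,\, s_\infty + a_2] \subset (0, J)$. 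Under the typical-class hypothesis, and because $s_1$ and $s_\infty$ are defined in \eqref{eq:base-stock-level-bounds} as points where $\Psi$ is strictly increasing, the density $\psi$ admits a uniform positive lower bound on this compact sub-interval; hence there are constants $c_0 > 0$ and $v_0 > 0$, independent of $n$ and $i$, such that $\P(W \in I_- \mid \F_{n,i}) \geq c_0$ and $\Var[W \mid W \in I_-, \F_{n,i}] \geq v_0$ almost surely.

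Finally, since $\widehat{v}_{n-i}$ is affine on $I_-$ with slope of absolute value at least $c_p$, the elementary conditional-variance inequality $\Var[Z \mid \mathcal{G}] \geq \P(A \mid \mathcal{G}) \Var[Z \mid A, \mathcal{G}]$ applied to $Z = \widehat{v}_{n-i}(W)$ and $A = \{W \in I_-\}$ gives
\begin{equation*}
\Var[\widehat{v}_{n-i}(W) \mid \F_{n,i}] \;\geq\; c_0 \cdot c_p^2 \cdot v_0 \;=:\; \beta \;>\; 0,
\end{equation*}
and taking expectations yields $\E[d_{n,i+1}^2] \geq \beta$. The only non-routine point — and the main obstacle — is verifying the uniform positive lower bound on $\psi$ over the range $[s_1 + a_1, s_\infty + a_2]$; this is where the typical-class assumption, together with the non-degeneracy of the quantiles $s_1, s_\infty$ in \eqref{eq:base-stock-level-bounds}, enters the argument.
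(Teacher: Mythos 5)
Your overall strategy coincides with the paper's at the top level --- both reduce the lemma to a uniform bound $\E[d_{n,i+1}^2\mid\F_{n,i}]\ge\beta>0$ starting from the representation $d_{n,i+1}=\widehat{v}_{n-i}(\gamma_{n,i}(X_{n,i})-D_i)-\E[\widehat{v}_{n-i}(\gamma_{n,i}(X_{n,i})-D_i)\mid\F_{n,i}]$ --- but the execution is genuinely different. The paper symmetrizes with an independent copy $D_i'$ of the demand and localizes to the \emph{holding} region, where both post-demand inventory levels lie in $[0,s_1]$ and $\widehat{v}_{n-i}$ is affine; you instead use the (valid) total-variance inequality $\Var[Z\mid\G]\ge\P(A\mid\G)\,\Var[Z\mid A,\G]$ and localize to the \emph{backlog} region $W=\gamma_{n,i}(X_{n,i})-D_i\in[-a_2,-a_1]\subset(-s_1,0)$. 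Your affine-slope computation is correct: $v_k$ has slope $-c$ on $(-\infty,s_k]$ by the base-stock property, so $\widehat{v}_{n-i}=L+v_{n-i}$ has slope $-(c_p+c)$ on $(-s_1,0)$ (and $-c_p$ in the terminal case). This choice buys you something: the slope magnitude is at least $c_p>0$ unconditionally, whereas on the holding side the same computation gives slope $c_h-c$ (the ordering-cost and holding-cost contributions enter with opposite signs), which can vanish when $c=c_h$; in that sense your localization is the more robust of the two.

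The one genuine gap is the step you flag yourself: the uniform positive lower bound on $\psi$ over $[s_1+a_1,\,s_\infty+a_2]$, from which you deduce $\P(W\in I_-\mid\F_{n,i})\ge c_0$ and $\Var[W\mid W\in I_-,\F_{n,i}]\ge v_0$. This does not follow from ``$\Psi$ is strictly increasing at $s_1$ and $s_\infty$.'' To get it you would have to argue that Definition \ref{m-def} forces $\psi$ to be (a.e.\ equal to) a unimodal function, that the interval in question lies strictly inside the support --- which needs $a_2<u-s_\infty$ with $u$ the right endpoint of the support rather than of $[0,J]$, using $\Psi(s_\infty)=c_p/(c_h+c_p)<1$ --- and that a unimodal density is bounded below on compact subsets of the interior of its support. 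That is all doable but is real work you have not carried out. The cheaper repair, which is exactly the device the paper uses on its own region, avoids any pointwise bound on $\psi$: split $[y+a_1,y+a_2]$ into thirds, note that for each $y\in[s_1,s_\infty]$ the two outer thirds carry positive mass (increments of $\Psi$ over subintervals of the interior of the support), take the infimum over the compact set $[s_1,s_\infty]$ of the continuous product of these two masses, and observe that two independent draws landing in opposite thirds force a separation of at least $(a_2-a_1)/3$, which lower-bounds the conditional variance. With that substitution your proof closes.
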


\begin{proof}
We now let $(D_1',D_2', \ldots,\! D_n')$ be an independent copy of  $(D_1,D_2, \ldots, D_n)$.
Since $X_{n,i}$ is $\F_{n,i}$-measurable,
one then has the further representation
\begin{equation*}
\E[d_{n,i+1}^2\,|\, \F_{n,i} ] = \frac{1}{2} \E[ \{\widehat{v}_{n-i} (\gamma_{n,i}(X_{n,i}) -D_{i}) - \widehat{v}_{n-i} (\gamma_{n,i}(X_{n,i}) -D'_{i})\}^2 \,|\, \F_{n,i} ].
\end{equation*}
Next, we consider the set $G(X_{n,i})$ of all $\omega$ such that
$$
D_{i}(\omega)\in [\gamma_{n,i}(X_{n,i})-s_1,\gamma_{n,i}(X_{n,i})] \quad \text{and} \quad
D'_{i}(\omega) \in [\gamma_{n,i}(X_{n,i})-s_1,\gamma_{n,i}(X_{n,i})].
$$
In other words, at time $i$ when the generalized inventory begins with $X_{n,i}$,
one has for $\omega \in G(X_{n,i})$ that either the demand
$D_{i}(\omega)$ or the demand $D_{i}'(\omega)$ would cause one to order up to the level $s_{n-i}$ in period $i+1$.

If we now replace $i$ with $i+1$ in the recursion \eqref{eq:value-functions-inventory2}
we see that
$$
\{ \widehat{v}_{n-i}(x) - \widehat{v}_{n-i}(y) \}\1\left((x,y) \in [0,s_1]^2\right) = (c + c_h)(y-x)\1\left((x,y) \in [0,s_1]^2\right),
$$
because the two new inventory levels for the next period $i+1$ are both given by $\gamma_{n,i+1}(x) = \gamma_{n,i+1}(y) = s_{n-i}$
and because one incurs holding costs that are proportional to the difference $y-x$.
This last equivalence gives us the lower bound
$$
\E[d_{n,i+1}^2\,|\, \F_{n,i} ]  \geq \frac{1}{2} (c+c_h)^2\E[ \{ D'_i - D_i \}^2 \1(G(X_{n,i})) \,|\, \F_{n,i} ],
$$
and the expectation on the right-hand side is given by
$$
I = \int_{\gamma_{n,i}(X_{n,i})-s_1}^{\gamma_{n,i}(X_{n,i})} \int_{\gamma_{n,i}(X_{n,i})-s_1}^{\gamma_{n,i}(X_{n,i})} \{ u - w \}^2 \psi(u)\psi(w) \,du \, dw.
$$
The integrand is non-negative so we can restrict the domain of integration from $G(X_{n,i})$
to
$$
G'(X_{n,i}) = [\gamma_{n,i}(X_{n,i})-s_1, \gamma_{n,i}(X_{n,i})-\frac{2}{3}s_1] \times [\gamma_{n,i}(X_{n,i})-\frac{1}{3}s_1, \gamma_{n,i}(X_{n,i})]
$$
to obtain the relaxed lower bound
$$
I \geq \int_{\gamma_{n,i}(X_{n,i})-s_1/3}^{\gamma_{n,i}(X_{n,i})} \int_{\gamma_{n,i}(X_{n,i})-s_1}^{\gamma_{n,i}(X_{n,i})-2s_1/3} \{ u - w \}^2 \psi(u)\psi(w) \,du \, dw.
$$
One then has the trivial bound
$$
\frac{s_1}{3} \leq  w - u \quad \quad \text{for all } (u,w) \in G'(X_{n,i}),
$$
so, in the end, we have
$$
I \geq \beta = \frac{s_1^2}{9}
\inf_{w \in [s_1, s_\infty]} \big\{ \Psi(w - \frac{2}{3}s_1) - \Psi(w-s_1) \big\} \big\{ \Psi(w) - \Psi(w-\frac{1}{3}s_1) \big\} > 0.
$$
where the strict positivity of $\beta$ follows from the fact that $\Psi$
is continuous and strictly increasing on the compact set $[0 , s_\infty] \subset [0, J]$.
Thus, the infimum is attained and strictly positive, so in summary we have
$$
\E[d_{n,i+1}^2\,|\, \F_{n,i} ] \geq \beta > 0 \quad \text{for all } 1 \leq i \leq n.
$$
One then completes the proof of the lemma by taking total expectations and summing over $1 \leq i \leq n$.
\end{proof}

\subsection*{State Space Extension: Degeneracy of a Bivariate Chain}

One can write the realized cost \eqref{eq:TotalCost} as an additive functional of a Markov chain
if one moves from the basic chain $\{X_{n,i}: 1 \leq i \leq n+1\}$ on $\X$ to the Markov chain
\begin{equation}\label{eq:Xi-inventory-bivariate}
\{\, \widehat{X}_{n,i}= (X_{n,i}, X_{n,i+1}): 1 \leq i \leq n \, \}
\end{equation}
on the enlarged state space $\X^2 = \X \times \X$. The realized cost \eqref{eq:TotalCost} then becomes
\begin{equation}\label{eq:RealizedCostSum}
\C_n(\pi^*_n) = \sum_{i=1}^n  f_{n,i}( \widehat{X}_{n,i} ),
\end{equation}
and one might hope to apply \citeauthor{Dobrushin:TPA1956}'s CLT  (Theorem \ref{th:CLT-Dobrushin}) to get the asymptotic
distribution of $\C_n(\pi^*_n)$.
To see why this plan does not succeed, one just needs to calculate the minimal ergodic coefficient
for the extended chain \eqref{eq:Xi-inventory-bivariate}.

For any $x,y \in \X$ and any $B\times B'  \in \B(\X^2)$, the
transition kernel of the Markov chain \eqref{eq:Xi-inventory-bivariate} is given by
\begin{align*}
\Kn_{i,i+1}((x,y), B\times B') & = \P (X_{n,i+1} \in B, X_{n,i+2} \in B' \,|\, X_{n,i}=x, X_{n,i+1}=y  )\\
                             & = \1(y \in B) \P( \{\gamma_{n,i+1}(y) - D_{i+1}\} \in B' \,|\, X_{n,i+1}=y  ),
\end{align*}
where $\gamma_{n,i}(x)$ is the function
defined in \eqref{eq:base-stock-policy}.
If we now set $B' = \X$, we have
$$
\Kn_{i,i+1}((x,y), B\times \X) =
\begin{cases}
1 & \quad \text{if } y \in B,\\
0 & \quad \text{if } y \in B^c,
\end{cases}
$$
so for $y \in B$ and $y'\in B^c$ we have
$$
\Kn_{i,i+1}((x,y), B\times \X) - \Kn_{i,i+1}((x,y'), B\times \X) = 1.
$$
This tells us that the minimal ergodic coefficient of the chain \eqref{eq:Xi-inventory-bivariate}
is given by
$$
\alpha_n = 1- \max_{1 \leq i < n} \big\{ \sup_{(x,y),(x',y')} \norm{ \Kn_{i,i+1}((x,y), \,\cdot\, ) - \Kn_{i,i+1}((x',y'),\,\cdot\,) }_{\rm TV}\big\} = 0,
$$
and, as a consequence,  we see that \citeauthor{Dobrushin:TPA1956}'s classic CLT simply does not apply
to the sum \eqref{eq:RealizedCostSum}.

Finally, as one ponders alternative proofs, there is a further possibility that one might consider.
In Section \ref{se:Dobrushin} we noted the possibility of replacing
the minimal ergodic coefficient $\alpha_n$ of the Markov chain \eqref{eq:Xi-inventory-bivariate}
with a potentially less fragile measure of dependence such as the maximal coefficient of correlation $\rho_n$
used by \citet{Pel:PTRF2012}.
For the bivariate chain \eqref{eq:Xi-inventory-bivariate}, the maximal coefficient of correlation is given by
$$
\rho_n = \max_{2 \leq i \leq n} \sup_g \left\{ \frac{\norm{\E[g(\widehat{X}_{n,i}) \,|\, \widehat{X}_{n,i-1}]}_2}{\norm{g(\widehat{X}_{n,i})}_2}
: \norm{g(\widehat{X}_{n,i})}_2 < \infty \text{ and } \E[g(\widehat{X}_{n,i})] = 0\right\},
$$
so for the functional
$$
g(\widehat{X}_{n,i}) = g(X_{n,i}, X_{n,i+1}) = X_{n,i} - \E[X_{n,i}],
$$
one has $\rho_n = 1$, and we see that the CLT of \citet{Pel:PTRF2012} does not help us here.

\subsection*{Accommodation of Lead Times for Deliveries}

To keep the description of the inventory problem as brief as possible, we have assumed
that order fulfillment is instantaneous. Nevertheless, in a more realistic model, one might want to
accommodate the possibility of lead times for delivery fulfillments.

One practical benefit of our ``look-ahead" parameter $m$ is that one can
allow for lead times and still stay within the scope of Theorem \ref{th:CLT-nonhomogeneous-chain}. We do not need to pursue this
particular extension here, but it does help to illustrate another way the look-ahead parameter can be used.

\section{An Application in Combinatorial Optimization: \\ Online Alternating Subsequences}\label{se:alternating}

Given a sequence $y_1, y_2, \ldots, y_n$ of $n$ distinct real numbers,
we say that a subsequence $y_{i_1}, y_{i_2},\ldots, y_{i_k}$, $1\leq i_1 < i_2 < \cdots < i_k \leq n$,  is
\emph{alternating} provided that the relative magnitudes alternate as in
$$
y_{i_1} < y_{i_2} > y_{i_3} < y_{i_4} > \cdots
\quad \text{or} \quad
y_{i_1} > y_{i_2} < y_{i_3} > y_{i_4} < \cdots .
$$
Combinatorial investigations of alternating subsequences go back to
Euler \citep[cf.]{Sta:CM2010}, but probabilistic investigations are more recent;
\citet{Wid:EJC2006}, Pemantle \citep[cf.][p.~568]{Sta:PROC2007}, \citet{Sta:MMJ2008}
and \citet{HouRes:EJC2010} all considered the distribution of the length of the longest alternating
subsequence of a random permutation or of a
sequence $\{Y_1, Y_2, \ldots, Y_n\}$ of independent random variables with the uniform
distribution on $[0,1]$.
There have also been recent applications of this work in
computer science
\citep[e.g.]{Rom:DMTCS2011,BanEpp:AACO2012}
and in tests of independence \citep[cf.][p.~312]{BroDav:SPRI2006}.

Here we consider alternating subsequences in a \emph{sequential}, or \emph{online}, context where we
are presented with the values $Y_1, Y_2, \ldots, Y_n$
one at the time, and the goal is to select an alternating subsequence
\begin{equation}\label{eq:alternating-subsequence}
Y_{\tau_1} < Y_{\tau_2} > Y_{\tau_3} < Y_{\tau_4} > \cdots \lessgtr Y_{\tau_k}
\end{equation}
that has maximal expected length.

A sequence of selection times $1 \leq \tau_1 < \tau_2 < \cdots < \tau_k \leq n$
that satisfy  \eqref{eq:alternating-subsequence} is called
a \emph{feasible policy} if our decision to accept or reject  $Y_i$ as member of the alternating subsequence is based
only on our knowledge of the observations
$\{Y_1, Y_2,\ldots, Y_i\}$.
In more formal terms, the feasibility of a policy is equivalent to requiring that the
indices $\tau_k$, $k=1,2, \ldots$, are all stopping times with respect to
the increasing sequence of $\sigma$-fields $\A_i = \sigma\{Y_1, Y_2,  \ldots, Y_i\}$, $1 \leq i \leq n$.

We now let $\Pi$ denote the set of all feasible policies, and
for $\pi \in \Pi$, we let $A^o_n(\pi)$ be the number of alternating selections made by $\pi$ for the realization
$\{Y_1,Y_2, \ldots, Y_n\}$, so
$$
A^o_n(\pi) = \max\left\{ k : Y_{\tau_1} < Y_{\tau_2} >  \cdots  \lessgtr Y_{\tau_k} \text{ and }
1 \leq \tau_1 < \tau_2 < \cdots < \tau_k \leq n\right\}.
$$
We say that a policy $\pi^*_n \in \Pi$ is optimal (or, more precisely, \emph{mean-optimal}) if
\begin{equation*}
\E[A^o_n(\pi^*_n)] = \sup_{\pi \in \Pi} \E[A^o_n(\pi)].
\end{equation*}
\citet{ArlCheSheSte:JAP2011} found that for each $n$
there is a unique mean-optimal policy $\pi^*_n$ such that
$$
\E[A^o_n(\pi^*_n)] = (2 - \sqrt{2}) n  + O(1),
$$
and it was later found that there is a CLT for $A^o_{n}(\pi^*_n)$.
\begin{theorem}[CLT for Optimal Number of Alternating Selections]\label{th:CLTAlt}
For the mean-optimal number of alternating selections $A^o_{n}(\pi^*_n)$ one has
\begin{equation*}
\frac{A^o_{n}(\pi^*_n) - \E[A^o_{n}(\pi^*_n)]}{\sqrt{\Var[A^o_{n}(\pi^*_n)]}} \Longrightarrow N(0,1)
\quad \quad \text{as }n \rightarrow\infty.
\end{equation*}
\end{theorem}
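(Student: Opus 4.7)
My plan is to apply Corollary \ref{cor:CLT} after recasting $A^o_n(\pi^*_n)$ as a sum of the form \eqref{eq:Sn} via a two-state temporally non-homogeneous Markov chain with look-ahead parameter $m=1$. Introduce the direction variable $\sigma_i \in \{+1,-1\}$ that tracks which kind of alternation $\pi^*_n$ is seeking at time $i$: $\sigma_i = +1$ if the next accepted value must exceed the previously accepted one, and $\sigma_i = -1$ otherwise. Fix $\sigma_1$ according to $\pi^*_n$, and let $\sigma_{i+1} = -\sigma_i$ if $Y_i$ is selected and $\sigma_{i+1} = \sigma_i$ otherwise. The characterization of the mean-optimal policy in \citet{ArlCheSheSte:JAP2011} shows that the acceptance event at time $i$ depends on the past only through $(Y_i,\sigma_i,n-i)$, so it has the form $\{Y_i \in R_{n,i}(\sigma_i)\}$ for a deterministic Borel set $R_{n,i}(\sigma_i) \subset [0,1]$. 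Since $Y_i$ is independent of $\sigma_i$, this makes $\{\sigma_i: 1 \leq i \leq n+1\}$ a temporally non-homogeneous two-state Markov chain. Because selections coincide with direction flips,
\begin{equation*}
A^o_n(\pi^*_n) = \sum_{i=1}^n \mathbf{1}[\sigma_{i+1}\neq \sigma_i],
\end{equation*}
which matches \eqref{eq:Sn} with $X_{n,i} = \sigma_i$, $m=1$, $f_{n,i}(s,s') = \mathbf{1}[s\neq s']$, and $C_n \equiv 1$.

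\textbf{Ergodicity.} Let $p_{n,i}^\pm = \P(Y_i \in R_{n,i}(\pm 1))$ denote the one-step flip probabilities. A direct computation for the two-state chain gives
\[
\alpha(\Kn_{i,i+1}) = \min\!\bigl(p_{n,i}^+ + p_{n,i}^-,\; 2 - p_{n,i}^+ - p_{n,i}^-\bigr),
\]
so $\alpha_n \geq c > 0$ as soon as both $p_{n,i}^+$ and $p_{n,i}^-$ lie in a compact subinterval of $(0,1)$ uniformly in $1 \leq i < n$ and $n \geq 1$. I would deduce this uniform confinement from the monotonicity in time-to-go and the interior-limit behavior of the optimal thresholds in \citet{ArlCheSheSte:JAP2011}, properties that also underlie the first-order asymptotic $\E[A^o_n(\pi^*_n)] = (2-\sqrt{2})n + O(1)$ recalled in the excerpt.

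\textbf{Variance lower bound and conclusion.} It remains to prove $\Var[A^o_n(\pi^*_n)] \to \infty$, after which $C_n \equiv 1$, $\alpha_n \geq c$, and Corollary \ref{cor:CLT} will deliver the CLT. I plan to obtain $\Var[A^o_n(\pi^*_n)] = \Omega(n)$ by an Efron--Stein argument along the filtration $\A_i = \sigma\{Y_1,\ldots,Y_i\}$: replacing a single $Y_i$ by an independent copy changes the realized direction $\sigma_{i+1}$ with probability bounded below, and the downstream perturbation of the remaining flip indicators decays geometrically at rate $1-\alpha_n$ by the multiplicative oscillation bound \eqref{eq:oscillation-inequality-multistep}. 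The coordinate-wise second moments then add up to $\Omega(n)$. This variance step is the main obstacle: the example in Section \ref{se:Dobrushin} rules out any Dobrushin-style automatic lower bound in the $m=1$ regime, so the combinatorial specifics of the alternating selection problem must be leveraged to preclude pathological cancellation among the globally coupled summands $\mathbf{1}[\sigma_{i+1}\neq \sigma_i]$.
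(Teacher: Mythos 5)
There is a genuine gap at the very first step: the reduction to a two-state direction chain fails because the mean-optimal policy is not determined by the direction alone. The characterization in \citet{ArlCheSheSte:JAP2011} (recalled in the paper via the recursion \eqref{eq:Xi-alternating}) is that the acceptance region at time $i$ is $\{Y_i \geq g_{n-i+1}(X_{n,i})\}$, where $X_{n,i}$ is the (reflected) \emph{value} of the last selected observation and the threshold functions $g_k$ depend non-trivially on that value --- indeed $g_k(x)=x$ for $x\in[1/3,1]$ while $g_k(x)\geq 1/6$ everywhere. Consequently your claim that the acceptance event depends on the past only through $(Y_i,\sigma_i,n-i)$ is false: conditioning on the history of directions $\sigma_1,\dots,\sigma_i$ does not pin down the flip probability, which is $1-g_{n-i+1}(X_{n,i})$ and varies with the last accepted value. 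So $\{\sigma_i\}$ is not a temporally non-homogeneous Markov chain, the sets $R_{n,i}(\sigma_i)$ do not exist as deterministic Borel sets, and the two-state ergodic-coefficient computation has nothing to attach to. (Your reduction would be valid for the greedy policy, which accepts every alternation, but the greedy policy is not mean-optimal --- the whole point of the threshold $g_k$ is to decline insufficient improvements.) The paper instead keeps the value of the last selection as the state, so that \eqref{eq:Xi-alternating} is genuinely Markov with $f_{n,i}(x,y)=\1(x\neq y)$ and $m=1$, and it extracts the uniform bound $\alpha_{n-2}\geq 1/6$ from the two quoted properties of $g_k$: the identity $g_k(x)=x$ on $[1/3,1]$ and the lower bound $g_k\geq 1/6$, which together confine the chain to $[0,5/6]$ and cap the total variation distance between transition kernels at $5/6$.

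Two further remarks. First, your variance step is only a sketch; the paper does not reprove the lower bound but cites \citet{ArlSte:AAP2014} for the fact that $\Var[A^o_n(\pi^*_n)]$ grows linearly, so if you want a self-contained argument you would have to carry out the Efron--Stein estimate against the correct (value-valued) chain, not the direction chain. Second, a small technical point you would still need to handle even after repairing the state space: the lower bound $g_k(x)\geq 1/6$ holds only for $k\geq 3$, so the uniform ergodicity estimate covers only $1\leq i\leq n-2$; the paper finesses this by proving the CLT for $A^o_{n-2}(\pi^*_n)$ and noting that it differs from $A^o_n(\pi^*_n)$ by at most $2$.
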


The main goal of this section is to show that Theorem \ref{th:CLT-nonhomogeneous-chain} leads to a
proof of this theorem that is
quicker, more robust, and more principled than the original proof given in \citet{ArlSte:AAP2014}.
In the process, we also get a second illustration of the
ways in which Theorem \ref{th:CLT-nonhomogeneous-chain} helps one sidestep the degeneracy that sometimes
arises when one tries to use \citeauthor{Dobrushin:TPA1956}'s theorem on a naturally associated bivariate chain.
In fact, it is this feature of \citeauthor{Dobrushin:TPA1956}'s theorem that
initially motivated the development of Theorem \ref{th:CLT-nonhomogeneous-chain}.

\subsection*{Structure of the Additive Process}

To formulate the alternating subsequence problem as an MDP, we first consider a new state space
that consists of pairs $(x,s)$ where $x$ denotes the value of the last selected observation
and where we set $s = 0$ if $x$ is a local minimum and set $s = 1$ if $x$ is a local maximum.
The decision problem then has a notable reflection property:
the optimal expected number of alternating selections that one makes when $k$ observations are yet to be seen
is the same if the system is in state $(x,0)$ or if the system is in state $(1-x, 1)$.
Earlier analyses exploited this symmetry to show that there is a sequence $\{g_k: 1 \leq k < \infty \}$
of optimal threshold functions such that if one sets $X_{n,1}=0$
and lets
\begin{equation}\label{eq:Xi-alternating}
X_{n,i+1} =
\begin{cases}
    X_{n,i} & \text{if $Y_i< g_{n-i+1}(X_{n,i})$}\\
    1 - Y_{i} & \text{if $Y_i\geq g_{n-i+1}(X_{n,i})$},
\end{cases}
\end{equation}
then the optimal number of alternating selections has the representation
\begin{equation*}
A^o_{n}(\pi^*_n) = \sum_{i=1}^{n } \1\left( Y_i\geq g_{n-i+1}(X_{n,i}) \right) = \sum_{i=1}^{n } \1( X_{n,i+1} \neq X_{n,i}).
\end{equation*}

The derivation of these relations requires a substantial amount of work, but for the
purpose of illustrating Theorem \ref{th:CLT-nonhomogeneous-chain} and Corollary \ref{cor:CLT}, one does
not need to go into the details of the construction of these optimal threshold functions.
Here it is enough to note that this representation for $A^o_{n}(\pi^*_n)$ is exactly of the form \eqref{eq:Sn}
that is addressed by Theorem \ref{th:CLT-nonhomogeneous-chain}.

The proof of Theorem \ref{th:CLTAlt} then takes two steps. First, one needs an appropriate lower bound for the
minimal ergodic coefficients of the chain \eqref{eq:Xi-alternating}, and second one needs to check that
the variance of $A^o_{n}(\pi^*_n)$ goes to infinity as
$n\rightarrow \infty$.

The second property is almost baked into the cake, and it is even proved in \citet{ArlSte:AAP2014} that
$\Var[A^o_{n}(\pi^*_n)]$ grows linearly with $n$.
Still, to keep our discussion brief, we will not repeat that
proof. Instead we focus on the new --- and more strategic --- fact that  minimal ergodic coefficients of the Markov chains \eqref{eq:Xi-alternating}
are uniformly bounded away from zero for all $1 \leq i \leq n-2$ and all $n \geq 3$.

\subsection*{A Lower Bound for the Minimal Ergodic Coefficient }

For any $x \in [0,1]$ and any Borel set $B \subseteq [0,1]$,
the Markov chain \eqref{eq:Xi-alternating} has the transition kernel
\begin{align*}
\Kn_{i,i+1}( x, B ) & = \1( x \in B) g_{n-i+1}(x) + \int_{g_{n-i+1}(x)}^1 \1( 1-u \in B) \, du \\
                    & = \1( x \in B) g_{n-i+1}(x) + \abs{ B \cap [0, 1 - g_{n-i+1}(x)]},
\end{align*}
where the first summand of the top equation accounts for the rejection of the newly presented value
$Y_{i}=u$, and the second summand accounts for its acceptance.

To obtain a meaningful estimate for the contraction coefficient of $\Kn_{i,i+1}$
we recall from the earlier analyses that the optimal threshold functions $\{g_k: 1 \leq k <\infty\}$ have the two
basic properties:
\begin{inparaenum}[(i)]
\item\label{eq:gk-identity}  $g_k(x) = x$ for all  $x \in [1/3, 1]$ and all $k \geq 1$, and
\item\label{eq:gk-lowerbound} $g_k(x) \geq 1/6$ for all $x \in [0, 1]$ and all $k \geq 3$.
\end{inparaenum}
Property \eqref{eq:gk-lowerbound} and the recursion \eqref{eq:Xi-alternating}
give us
$X_{n,i} \leq 5/6$ for all $1 \leq i \leq n-2$, and
we see from property \eqref{eq:gk-identity} that
$$
\delta(\Kn_{i,i+1}) = \sup_{x,x'} \norm{\Kn_{i,i+1}( x, \cdot ) - \Kn_{i,i+1}( x', \cdot ) }_{\rm TV} \leq \frac{5}{6}
\quad \text{for all }1 \leq i \leq n-2.
$$
This estimate gives us in turn that
$$
\alpha_{n-2} = \min_{1 \leq i < n-2} \{ 1 - \delta(\Kn_{i,i+1}) \} \geq \frac{1}{6},
$$
so by Corollary \ref{cor:CLT} we have the CLT for $A^o_{n-2}(\pi^*_n)$.
Since $A^o_{n}(\pi^*_n)$ and $A^o_{n-2}(\pi^*_n)$ differ by at most $2$, this also completes the
proof of Theorem \ref{th:CLTAlt}.

\section{A Final Observation} \label{se:conclusions}

Theorem \ref{th:CLT-nonhomogeneous-chain} generalizes the classical CLT of \citet{Dobrushin:TPA1956}, and it offers a pre-packaged
approach to the CLT for
the kinds of additive functionals that one meets in the theory of
finite horizon Markov decision processes. The technology of MDPs is wedded to the pursuit of
policies that maximize total expected rewards,
but such policies may not make good economic sense unless the realized reward
is ``well behaved."
While there are several ways to characterize good behavior,
asymptotic normality of the realized reward is likely to be high on almost anyone's list.
The orientation of Theorem \ref{th:CLT-nonhomogeneous-chain} addresses this issue in a direct and practical way.

The examples of Sections \ref{se:inventory} and \ref{se:alternating} illustrate more concretely
what one needs to do to apply
Theorem \ref{th:CLT-nonhomogeneous-chain}. In a nutshell, one needs to show that the variance of
the total reward goes to infinity and one needs an \emph{a priori} lower bound on the minimal coefficient
of ergodicity. These conditions are not trivial, but, as the examples show, they are not intractable.
Now, whenever one faces the question of a CLT for the total reward of a finite horizon MDP, there is an explicit
agenda that lays out what one needs to do.


\end{document}